\DeclareMathOperator{\tr}{tr}
\DeclareMathOperator{\Spec}{Spec}
\DeclareMathOperator{\Aut}{Aut}
\DeclareMathOperator{\Mod}{-Mod}
\DeclareMathOperator{\FI}{FI}
\DeclareMathOperator{\Conf}{Conf}
\DeclareMathOperator{\Frob}{Frob}
\DeclareMathOperator{\UConf}{UConf}
\DeclareMathOperator{\Gal}{Gal}
\DeclareMathOperator{\Bl}{Bl}
\DeclareMathOperator{\Poly}{Poly}
\newtheorem{thm}{Theorem}[section]
\newtheorem{prop}[thm]{Proposition}
\theoremstyle{definition}
\newtheorem{defn}[thm]{Definition}
\newtheorem{rem}[thm]{Remark}
\begin{document}

\title{\(\FI_G\)-modules and arithmetic statistics}
\author{Kevin Casto\footnote{Partially supported by NSF grants DMS-1105643 and DMS-1406209.}}
\maketitle

\begin{abstract}
This is a sequel to the paper \cite{Ca1}. Here, we extend the methods of Farb-Wolfson \cite{FW2} using the theory of \(\FI_G\)-modules to obtain stability of equivariant Galois representations of the \'etale cohomology of orbit configuration spaces. We establish subexponential bounds on the growth of unstable cohomology, and then use the Grothendieck-Lefschetz trace formula to obtain results on arithmetic statistics for orbit configuration spaces over finite fields. In particular, we show that the average value, across polynomials over \(\mathbb{F}_q\), of certain Gauss sums over their roots, stabilizes as the degree goes to infinity.
\end{abstract}

\section{Introduction}
In \cite{Ca1}, the author considered the orbit configuration space \(\Conf_n^G(M)\) associated to a free action of a finite group \(G\) on a manifold \(M\),
\[ \Conf_n^G(M) := \{(m_i) \in M^n \mid m_i \ne g m_j \,\forall g \in G\}. \]
The cohomology \(H^i(\Conf_n^G(M); \mathbb{Q})\) is a representation of \(W_n := G^n \rtimes S_n\). We showed that, if we consider all \(n\) at once, then \(H^i(\Conf^G(M); \mathbb{Q})\) forms a module over the category \(\FI_G\) introduced by \cite{SS2}. Furthermore, we proved \cite[Thm 3.1]{Ca1} that this \(\FI_G\)-module is finitely generated, when mild conditions are \(M\) are satisfied (e.g., \(\dim H^*(M; \mathbb{Q}) < \infty\)).

This finite generation implies the following. Given a partition-valued function \(\underline \lambda\) on the irreducible representations of \(G\) with \(\|\underline \lambda\| = n\), let \(L(\underline \lambda)\) be the associated irreducible representation of \(W_n\). Let \(c(G)\) be the set of conjugacy classes of \(G\). Define an \emph{\(\FI_G\) character polynomial} to be a polynomial in \(c(G)\)-labeled cycle-counting functions. This is a class function on \(W_n\). (We will define this and other terminology more precisely in \S2.2).
\begin{thm}[\cite{Ca1}]
Let \(M\) be a connected manifold with a free action of a finite group \(G\). Assume that \(\dim M \ge 2\) and that each \(\dim H^i(M; \mathbb{Q}) < \infty\). Then:\begin{enumerate}
\item The characters of \(H^i(\Conf_n^G(M); \mathbb{C})\) are given by a single character polynomial for all \(n \gg 0\).
\item The multiplicity of each irreducible \(W_n\)-representation in \(H^i(\Conf_n^G(M); \mathbb{C})\) is eventually independent of \(n\), and \(\dim H^i(\Conf_n^G(M); \mathbb{C})\) is given by a single polynomial for all \(n \gg 0\).
\end{enumerate}
\end{thm}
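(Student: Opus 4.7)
The plan is to deduce both statements from finite generation of the $\FI_G$-module $V := H^i(\Conf^G(M); \mathbb{Q})$ by transporting the character-polynomial machinery of Church--Ellenberg--Farb from $\FI$-modules to the $\FI_G$ setting.

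\textbf{Step 1: Reduction to representable modules.} I would first fix a surjection $\bigoplus_{j=1}^r M(W_j) \twoheadrightarrow V$, where each $W_j$ is a finite-dimensional representation of some $W_{k_j}$ and the \emph{representable} $\FI_G$-module is
\[
M(W)_n = \Ind_{W_k \times W_{n-k}}^{W_n}\bigl(W \boxtimes \mathbb{Q}\bigr).
\]
Because $\FI_G$-modules over a characteristic-zero field are Noetherian (Sam--Snowden \cite{SS2}), the kernel is again finitely generated, so the process iterates. It therefore suffices to prove (1) for each $M(W)$ and close under cokernels; the stable ranges add up.

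\textbf{Step 2: Character polynomials for $M(W)$.} An element $w = (g_1,\dots,g_n;\sigma) \in W_n$ partitions $\{1,\dots,n\}$ into $\sigma$-cycles, and each such cycle acquires a $G$-conjugacy class from the product of the $g_i$'s read along the cycle. This produces a family of $c(G)$-labeled cycle-counting class functions $X_{\ell,[g]}$ on $W_n$. Applying the Frobenius formula to $\chi_{M(W)_n}(w)$---that is, summing $\tr(W \boxtimes \mathbb{Q})$ over $w$-fixed cosets of $W_k \times W_{n-k}$---I expect that $\chi_{M(W)_n}(w)$ is a fixed polynomial in finitely many of the $X_{\ell,[g]}$ once $n$ is large enough that every relevant cycle fits inside the complement of the ``core'' $W_k$. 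This is the $\FI_G$ analogue of the corresponding calculation for free $\FI$-modules.

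\textbf{Step 3: Multiplicities and dimensions.} For (2), I would pair the character polynomial from (1) against the characters of the irreducibles $L(\underline\lambda)$. These are built via Clifford theory from partitions attached to elements of $\Irr(G)$ and admit a uniform description in terms of the $X_{\ell,[g]}$. Writing $\underline\lambda[n]$ for the extension that pads the trivial component's partition with a top row of size $n - \|\underline\lambda\|$, the inner product $\langle \chi_{V_n}, \chi_{L(\underline\lambda[n])}\rangle$ becomes the pairing of a polynomial in $X_{\ell,[g]}$ against stable characters, hence is independent of $n$ for $n$ large. Polynomiality of $\dim V_n = \sum_{\underline\lambda} m_{\underline\lambda}\cdot \dim L(\underline\lambda[n])$ then follows because each $\dim L(\underline\lambda[n])$ is polynomial in $n$ by the hook-length formula applied to the stabilizing component.

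\textbf{Main obstacle.} The genuinely new content relative to the $\FI$-module case is the bookkeeping for the $G$-labels imposed by the semidirect-product structure of $W_n$: the conjugacy class assigned to a $\sigma$-cycle depends on the cyclic ordering of the $g_i$'s, so the delicate step is defining the $X_{\ell,[g]}$ correctly and verifying that the induced-character sum reorganizes into a polynomial in them with coefficients independent of $n$. Once this combinatorial identity is in hand, the remainder is a routine adaptation of the $\FI$-module arguments.
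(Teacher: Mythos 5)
Two remarks. First, be aware that the paper does not actually reprove this statement: it is quoted from \cite{Ca1}, and in context its proof is just the combination of \cite[Thm 3.1]{Ca1} --- finite generation of the \(\FI_G\)-module \(H^i(\Conf^G(M);\mathbb{Q})\), proved via the Leray spectral sequence for \(\Conf^G_n(M)\subset M^n\) together with Noetherianity of \(\FI_G\)-modules --- with the structural properties of finitely generated \(\FI_G\)-modules recorded in \thref{fig_props} (representation stability and eventual agreement of characters with a single character polynomial). Your proposal assumes the finite generation outright, yet that is precisely where the hypotheses on \(M\) (connected, \(\dim M\ge 2\), finite-dimensional cohomology) enter, so a complete argument must include or explicitly invoke that step.

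Second, and more seriously, Step 1 has a genuine gap. Knowing that \(\chi_{M(W)_n}\) is eventually a character polynomial does not transfer to a quotient \(V\) of \(\bigoplus_j M(W_j)\): you would need the character of the kernel \(K\), and while Noetherianity makes \(K\) finitely generated, presenting \(K\) in turn as a quotient of induced modules starts an iteration that never terminates (the induced resolution is infinite, and for a fixed \(n\) none of its terms vanish), so ``close under cokernels'' is an assertion, not an argument; eventual polynomiality of characters is not inherited by subquotients from the covering module alone. The standard route --- and the one behind the structural theorem being cited --- reverses your logical order: first prove multiplicity stability for finitely generated \(\FI_G\)-modules (via \cite{SS2}, \cite{GL3}), then prove the wreath-product analogue of the classical fact that the character of the padded irreducible \(L(\underline\lambda[n])\) is given by a single polynomial in the \(c(G)\)-labeled cycle counts for all large \(n\); part (1) then follows by summing the finitely many stabilized multiplicities, and part (2) follows as you indicate, with \(\dim L(\underline\lambda[n])\) polynomial in \(n\). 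Alternatively, your Step 1 could be made honest by a Nagpal-type ``eventually induced-filtered'' theorem for \(\FI_G\)-modules, but that requires a real proof, not just Noetherianity. Your Step 2 computation (each cycle labeled by the conjugacy class of the product of the group elements read along it, Frobenius formula for the induced character) is correct and is indeed the right combinatorial input for the induced modules.
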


In this paper we consider the case where we replace the manifold \(M\) with a scheme \(X\) over \(\mathbb{Z}[1/N]\), or more generally \(\mathcal{O}_k[1/N]\) for a number field \(k\), with an algebraic action of \(G\). Here the \(\mathbb{C}\)-points \(X(\mathbb{C})\) take the place of \(M\). However, now we can also consider the points \(X(\mathbb{F}_q)\), for \(\mathbb{F}_q\) a residue field of \(\mathcal{O}_k\).

We generalize the results of Farb-Wolfson \cite{FW2} on \(\Conf_n(X)\) to the orbit configuration space \(\Conf^G_n(X)\). The first result is on \emph{\'etale representation stability} in the sense of \cite{FW2}.

\begin{thm}[\bfseries \'Etale representation stability for orbit configuration spaces]
Let \(X\) be a smooth scheme over \(\mathcal{O}_k[1/N]\) with geometrically connected fibers. Let \(G\) be a finite group acting freely on \(X\), such that \(X\) is smoothly compactifiable as a \(G\)-scheme. Let \(K\) be either a number field or an unramified function field over \(\mathcal{O}_k[1/N]\). For each \(i \ge 0\), the \(\Gal(\overline{K}/K)\)-\(\FI_G\)-module \(H^i_{\acute et}(\Conf^G(X)_{/\overline{K}}; \mathbb{Q}_l)\) is finitely generated.
\end{thm}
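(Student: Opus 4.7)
My plan is to reduce the étale statement to the topological one from Theorem 1.1 via the Artin comparison isomorphism and smooth (together with, where needed, proper) base change, following the template of Farb--Wolfson \cite{FW2}. The key starting observation is that the $\FI_G$-module structure on $\Conf^G(X)$ is entirely algebraic: the forgetful maps and the $G$-equivariant relabelings are morphisms of schemes defined over $\mathcal{O}_k[1/N]$. After base change to $\overline{K}$, this makes $H^i_{\acute et}(\Conf_n^G(X)_{/\overline{K}}; \mathbb{Q}_l)$ into an $\FI_G$-module whose transition maps commute with the $\Gal(\overline{K}/K)$-action, since the transition maps descend to $K$. Because each level $H^i_{\acute et}(\Conf_n^G(X)_{/\overline{K}}; \mathbb{Q}_l)$ is a finite-dimensional $\mathbb{Q}_l$-vector space, finite generation as a plain $\FI_G$-module will upgrade formally to finite generation in the $\Gal(\overline{K}/K)$-equivariant category: a finite generating tuple spans finite-dimensional Galois subrepresentations at its levels, and those are finite-dimensional Galois-stable $\FI_G$-generating sets.

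In the number field case, I fix an embedding $\overline{K} \hookrightarrow \mathbb{C}$. Smooth base change gives
\[ H^i_{\acute et}(\Conf^G(X)_{/\overline{K}}; \mathbb{Q}_l) \cong H^i_{\acute et}(\Conf^G(X)_{/\mathbb{C}}; \mathbb{Q}_l), \]
and the Artin comparison theorem identifies the right-hand side with $H^i(\Conf^G(X(\mathbb{C})); \mathbb{Q}_l)$. Both isomorphisms are natural with respect to open immersions and the $G$-action, so they respect the $\FI_G$ structure. The complex manifold $X(\mathbb{C})$ carries a free $G$-action and, by the smooth $G$-compactifiability hypothesis, has the homotopy type of a finite $CW$-complex (in particular $\dim H^*(X(\mathbb{C});\mathbb{Q}) < \infty$). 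Theorem 1.1 therefore delivers finite generation of $H^i(\Conf^G(X(\mathbb{C})); \mathbb{Q})$ as an $\FI_G$-module, which transfers back along the above isomorphisms after extending scalars to $\mathbb{Q}_l$.

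For the unramified function field case, $K$ arises as the function field of a smooth connected scheme $S$ over $\mathcal{O}_k[1/N]$. I would apply smooth and proper base change to the relative orbit configuration space $\Conf^G(X)_S \to S$, using the $G$-equivariant compactification of $X$ to stratify the boundary, to show that the higher direct images $R^i\pi_* \mathbb{Q}_l$ are lisse $\mathbb{Q}_l$-sheaves on $S$. Their stalks at a geometric generic point are precisely the $\Gal(\overline{K}/K)$-representations in the theorem, while stalks at a $\mathbb{C}$-valued geometric point of $S$ recover the topological cohomology treated in the previous paragraph. Since these stalks are isomorphic as $\FI_G$-modules (by proper base change and the naturality of the comparison maps in families), the finite generation transfers in the same way.

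The main obstacle I foresee is the bookkeeping needed to verify that the base change and comparison isomorphisms are simultaneously compatible with the $\FI_G$ transition maps \emph{and} with the boundary stratification coming from the $G$-equivariant compactification. Each ingredient individually is natural for open immersions and for the $G$-action, but one has to check that the smooth $G$-compactification of $X$ yields, level by level in $n$, a compatible system of smooth compactifications of $\Conf^G_n(X)$ whose boundary divisors restrict correctly under the forget-a-point maps --- essentially an equivariant refinement of the Fulton--MacPherson style compactification used in \cite{FW2}. Once this compatibility is established, the argument above assembles finite generation from Theorem 1.1.
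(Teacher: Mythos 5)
Your overall strategy coincides with the paper's: finite generation of the topological \(\FI_G\)-module \(H^i(\Conf^G(X(\mathbb{C}));\mathbb{Q})\) from \cite[Thm 3.1]{Ca1}, transferred to \'etale cohomology by comparison isomorphisms that are natural for the co-\(\FI_G\) structure; your number field case (Artin comparison plus the observation that plain \(\FI_G\) finite generation a fortiori gives finite generation over the Galois-equivariant structure, which is automatic and needs no argument about finite-dimensional subrepresentations) is essentially the paper's. The gap is in the other case: the entire difficulty there is the step you defer as ``bookkeeping'' and ``the main obstacle I foresee.'' To know that the comparison between \(H^*_{\acute et}(\Conf^G_n(X)_{/\overline{\mathbb{F}}_q};\mathbb{Q}_l)\) and singular cohomology commutes with the forget-a-point and relabeling maps --- equivalently, to run your lisse-sheaf/smooth-proper base change argument --- one needs, for every \(n\) and compatibly with the co-\(\FI_G\) structure, a smooth proper scheme containing \(\Conf^G_n(X)\) with complement a normal crossings divisor having good reduction at \(\mathfrak{p}\nmid N\cdot l\); \(\Conf^G_n(X)\) is not proper, and a \(G\)-equivariant compactification of \(X\) alone does not produce this. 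Asserting ``once this compatibility is established'' leaves the theorem unproved, because this is precisely the nontrivial input.

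The paper supplies it as a separate proposition (\thref{cpctify}) and not by an ad hoc equivariant refinement of Fulton--MacPherson: one uses the wonderful compactification of the arrangement of diagonals \(\Delta_{i,g,j}=\{x_i=gx_j\}\) in \(\overline{X}^n\) in the sense of MacPherson--Procesi \cite{MP}, in its scheme-theoretic form due to Hu \cite{Hu} and Li \cite{Li}, taking the closure of \(\Conf^G_n(X)\) in \(\prod_{i\ne j,\,g\in G}\Bl_{\Delta_{i,g,j}}(X^n)\). The one genuinely new verification is that this arrangement is a \emph{building set} in Li's sense, and here the freeness of the \(G\)-action is exactly what is used: for fixed \((i,j)\) the diagonals \(\Delta_{i,g,j}\) and \(\Delta_{i,h,j}\) with \(g\ne h\) are disjoint. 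Li's Theorem 1.2 then gives smoothness and a normal crossings boundary on geometric fibers over \(\Spec \mathcal{O}_k[1/N]\), hence good reduction in the sense of \cite[Prop 7.7]{EVW}, and the construction is manifestly \(W_n\)- and co-\(\FI_G\)-equivariant because it is defined by the defining equations of the diagonals. With that in hand, the paper does not re-derive lisse-ness by hand but invokes the \(I\)-scheme base change theorem (\thref{basechange}, the \(\FI_G\)-version of \cite[Thm 2.6]{FW2}, itself the \(I\)-scheme form of \cite[Thm 7.7]{EVW}), after reducing the unramified function field case to a finite residue field. So: same route, but your proposal is missing the key idea (the wonderful compactification and the building-set check via freeness) that makes the positive-characteristic half of the argument work.
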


The next result concerns bounds on \(H^i(\Conf^G_n(X))\) as \(i\) varies, which are necessary to ensure convergence of the point-counts we are interested in.

\begin{thm}[\bfseries Orbit configuration spaces have convegent cohomology]
\thlabel{conf_convg}
Let \(X\) be a connected orientable manifold of dimension at least 2 with \(\dim H^*(M; \mathbb{Q})\) finite-dimensional. Let \(G\) be a finite group acting freely on \(M\). Then for each character polynomial \(P\), the inner product  \(|\langle P, H^i(\Conf^G_n(X))\rangle|\) is bounded subexponentially in \(i\) and uniformly in \(n\).
\end{thm}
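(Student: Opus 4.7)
My approach will mimic the convergence argument of Church--Ellenberg--Farb and Farb--Wolfson, adapted to the $\FI_G$ setting. The starting point is the orbit-configuration analog of the Totaro/Cohen--Taylor spectral sequence for the open inclusion $\Conf^G_n(X) \hookrightarrow X^n$; this is essentially the same spectral sequence that underlies \cite[Thm.~3.1]{Ca1}. Its $E_2$-page decomposes as a direct sum, indexed by graphs $\Gamma$ on $[n]$ whose edges carry $G$-labels (recording ``collisions'' of the form $m_i = g \cdot m_j$), of tensor products of copies of $H^*(X;\mathbb{Q})$ with a combinatorial factor arising from the reduced cohomology of the link of the stratum associated to $\Gamma$. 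Over $\mathbb{Q}$ this factor is concentrated in degrees that are multiples of $\dim X - 1$, and the whole $E_2$-page assembles into an $\FI_G$-module compatible with all differentials. Consequently
\[
|\langle P, H^i(\Conf^G_n(X);\mathbb{Q})\rangle| \;\le\; \sum_{p+q=i}|\langle P, E_2^{p,q}\rangle|,
\]
which reduces the theorem to a subexponential-in-$i$, uniform-in-$n$ bound on the $E_2$ side.

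Next, I would group the summands by the $W_n$-isomorphism type of the underlying $G$-labeled graph. Each orbit contributes an induced representation $\Ind^{W_n}_{\Aut(\Gamma)} V_\Gamma$, where $V_\Gamma$ is a fixed vector space depending only on $H^*(X;\mathbb{Q})$ and the combinatorial type of $\Gamma$. Because $P$ is an $\FI_G$ character polynomial, the pairing $\langle P, \Ind^{W_n}_{\Aut(\Gamma)} V_\Gamma\rangle$ is computable by averaging $P$ against $\Aut(\Gamma)$-characters on the conjugacy classes of $W_n$; a cycle-index / ``two-cycle'' argument, generalizing the one used in the $\FI$-module case, shows that this pairing vanishes unless the number of nontrivial blocks and nontrivial $G$-labels of $\Gamma$ is bounded by a constant $C(\deg P)$, and is otherwise $O_P(1)$, uniformly in $n$. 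A graph contributing to $E_2^{*, i}$ has at most $O(i/(\dim X - 1))$ edges, and the number of isomorphism types of such graphs grows at most like $C^i$ with $C$ depending only on $|G|$ and $\dim H^*(X;\mathbb{Q})$; combining these estimates gives $|\langle P, H^i\rangle| = O_P(C^i)$, which is subexponential.

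The main obstacle is controlling the combinatorial explosion introduced by the $G$-labels on edges: in the trivial case $G = \{e\}$ each pair $\{i,j\}$ carries a single edge class, whereas here there are $|G|$ per pair, and one must show that the character polynomial pairing kills all but a polynomial-in-$n$ subfamily. This requires a careful analysis of the cycle structure of elements of $W_n = G^n \rtimes S_n$ that fix a prescribed $G$-labeled graph, generalizing the two-cycle vanishing argument in the $\FI$-module setting. Once that combinatorial estimate is in hand, the remainder of the proof is essentially bookkeeping parallel to the one in \cite{FW2}.
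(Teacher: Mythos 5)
There is a genuine gap, and it is located exactly where you flag the ``main obstacle'': your final estimate is not subexponential. You bound the contribution of the \(E_2\)-page by (number of isomorphism types of \(G\)-labeled graphs with \(O(i/(\dim X -1))\) edges) times \(O_P(1)\), and you yourself record the outcome as \(|\langle P, H^i\rangle| = O_P(C^i)\). But \(C^i\) with \(C>1\) is exponential, and the theorem (and the definition of convergence it feeds into, via \thref{convg} and the estimate \(\sum_i q^{-i/2}F_P(i)<\infty\) in the proof of \thref{fig_stats}) requires a bound that is subexponential in \(i\) for \emph{every} \(q\); an exponential bound would only give convergence for \(q > C^2\). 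The count of isomorphism classes of graphs with \(m\) edges really is exponential in \(m\) (already dense graphs on \(\sim\sqrt{2m}\) vertices contribute roughly \(2^{m-o(m)}\) classes), so this cannot be repaired by sharpening constants: one must either work with an actual monomial basis of the Orlik--Solomon-type algebra and count its \(W_{n-a}\)-orbits (this is where the partition-type \(e^{c\sqrt{i}}\) bounds of \cite{CEF2} come from), or avoid the graph count altogether. A second, smaller problem: your claimed vanishing criterion --- that \(\langle P, \Ind^{W_n}_{\Aut(\Gamma)}V_\Gamma\rangle\) vanishes unless \(\Gamma\) has boundedly many nontrivial blocks and \(G\)-labels --- is false as stated; already for \(P=1\) the pairing computes \(\dim V_\Gamma^{\Aut(\Gamma)}\), which is nonzero for graphs of arbitrary complexity. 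The correct mechanism is the equivalence in \thref{convg} between character-polynomial pairings and dimensions of \(W_{n-a}\)-invariants, and those invariants do not vanish for complicated graphs; they are merely counted more coarsely.

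For comparison, the paper's proof sidesteps all of this combinatorics. It uses \thref{convg} to replace the pairing with \(\dim\bigl(H^i(\Conf^G_n(X))\bigr)^{W_{n-a}}\), invokes the result of \cite{Ca1} that \(H^*(\Conf^G_n(X);\mathbb{Q})\) is a subquotient of \(H^*(X^n;\mathbb{Q})\otimes A(G,d)_n\), and then uses the key structural fact that \(A(G,d)_n\) is spanned by the \(G^n\)-translates of the ordinary arrangement algebra \(A(d)_n\). Taking \(G^{n-a}\)-invariants first therefore costs only a factor \(|G|^a\), reducing everything to the already-established subexponential bound of Farb--Wolfson for \(\bigl(H^*(X^n)\otimes A(d)_n\bigr)^{S_{n-a}}\). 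If you want to salvage your route, the place to invest effort is in an explicit \(W_n\)-stable spanning set of \(E_2\) (or of \(A(G,d)_n\)) whose \(W_{n-a}\)-orbits in degree \(i\) are indexed by partition-like data, not by arbitrary \(G\)-labeled graphs.
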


Finally, we use the Grothendieck-Lefschetz trace formula, along with Theoresm 1.2 and 1.3, to obtain the following result on arithmetic statistics.

\begin{thm}[\bfseries Arithmetic statistics for orbit configuration spaces stabilize]
\thlabel{arith_stats}
Let \(X\) be a smooth quasiprojective scheme over \(\mathcal{O}_k[1/N]\) with geometrically connected fibers. Let \(G\) be a finite group acting freely on \(X\), such that \(X\) is a smoothly compactifiable \(G\)-scheme. Then for any \(\FI_G\) character polynomial \(P\),
\begin{equation} \label{groth}
 \lim_{n \to \infty} q^{-n \dim X} \sum_{y \in \UConf_n(X/G)(\mathbb{F}_q)} P(\sigma_y) = \sum_{i = 0}^\infty (-1)^i \tr\left(\Frob_q : \langle H^i(\Conf^G(X(\mathbb{C})); L), P\rangle\right)
\end{equation}
In particular both the limit on the left and the series on the right converge, and they converge to the same limit. 
\end{thm}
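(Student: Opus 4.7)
The strategy is to establish identity (\ref{groth}) at each fixed $n$ by the Grothendieck--Lefschetz trace formula with twisted coefficients, and then to pass to the limit $n \to \infty$ using Theorem 1.2 (finite generation) and \thref{conf_convg} (subexponential bounds) to justify the interchange of limit and infinite sum. The two previous theorems play complementary roles: finite generation forces each individual Frobenius-trace term on the right-hand side to stabilize in $n$, while the subexponential bound forces the series over $i$ to converge absolutely and uniformly in $n$.

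For fixed $n$, the freeness of the $G$-action ensures $\Conf_n^G(X)$ is smooth of dimension $n \dim X$ and that $\Conf_n^G(X) \to \UConf_n(X/G)$ is an étale $W_n$-torsor. An $\FI_G$ character polynomial $P$, restricted to level $n$, is a virtual class function on $W_n$ and so determines a virtual $\mathbb{Q}_l$-local system $\mathcal{L}_P$ on $\UConf_n(X/G)$ with the property that $\tr(\Frob_q \mid (\mathcal{L}_P)_{\bar y}) = P(\sigma_y)$ for each $y \in \UConf_n(X/G)(\mathbb{F}_q)$. The Grothendieck--Lefschetz trace formula applied to $\UConf_n(X/G)$ with coefficients in $\mathcal{L}_P$, combined with the Leray spectral sequence for the $W_n$-cover, yields
\[
\sum_{y \in \UConf_n(X/G)(\mathbb{F}_q)} P(\sigma_y) \;=\; \sum_{i \ge 0} (-1)^i \tr\bigl(\Frob_q \mid \langle H^i_{c,\acute et}(\Conf_n^G(X)_{/\overline K};\mathbb{Q}_l),\, P \rangle\bigr).
\]
Poincaré duality on the smooth variety $\Conf_n^G(X)$, together with normalization by $q^{-n\dim X}$, rewrites the right-hand side in terms of ordinary étale cohomology, and Artin's comparison theorem then identifies the underlying $W_n$-representations with the singular cohomology $H^i(\Conf_n^G(X(\mathbb{C}));\mathbb{Q}_l)$, equipped with Frobenius. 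This proves the finite-$n$ form of (\ref{groth}).

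To pass to $n \to \infty$, Theorem 1.2 says that the Galois-equivariant $\FI_G$-module $H^i_{\acute et}(\Conf^G(X)_{/\overline K};\mathbb{Q}_l)$ is finitely generated for each $i$; this implies that $\langle H^i_{\acute et},P\rangle$ is constant as a Galois representation for $n \gg 0$, so each Frobenius trace on the right-hand side of (\ref{groth}) has a well-defined $n \to \infty$ limit. Deligne's bound on the weights of compactly supported cohomology, combined with Poincaré duality, shows that (after the normalization by $q^{-n\dim X}$) the $i$-th Frobenius trace is bounded in absolute value by $q^{-i/2}$ times $\dim \langle H^i, P\rangle$. By \thref{conf_convg}, this dimension is bounded subexponentially in $i$ and uniformly in $n$, so the series on the right of (\ref{groth}) converges absolutely, uniformly in $n$. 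Uniform convergence permits the interchange of $\lim_n$ and $\sum_i$, and the theorem follows.

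The main obstacle is precisely the uniform-in-$n$ convergence needed in the last step: without a subexponential-in-$i$ bound on $|\langle H^i,P\rangle|$ that is independent of $n$, the series on the right of (\ref{groth}) need not converge term-by-term in the limit, and \emph{a fortiori} one cannot swap $\lim_n$ with $\sum_i$. The trace formula at finite $n$, the identification of the right-hand side via Artin comparison, and the stabilization of each individual term are essentially formal consequences of the framework once the $\FI_G$-equivariant étale machinery is set up; it is \thref{conf_convg} that does the real work of turning the finite-$n$ equalities into the limiting identity (\ref{groth}).
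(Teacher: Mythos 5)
Your proposal is correct and follows essentially the same route as the paper: the Grothendieck--Lefschetz formula at each finite $n$ combined with transfer and Poincar\'e duality, stabilization of each term via finite generation of the $\Gal$-$\FI_G$-module, and Deligne's weight bounds together with \thref{conf_convg} to dominate the series uniformly in $n$ and justify the interchange of $\lim_n$ and $\sum_i$. The only organizational difference is that the paper isolates this argument as a general statement (\thref{fig_stats}) for any convergent co-$\FI_G$-scheme satisfying \'etale representation stability and then deduces \thref{arith_stats} by verifying its hypotheses via \thref{etale_repstab} and \thref{conf_convg}; also note that in characteristic $p$ the identification with singular cohomology of $X(\mathbb{C})$ rests on the base-change theorem \thref{basechange} through the smooth compactification, rather than on Artin comparison alone.
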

\subsection*{Gauss Sums}
For the specific case \(G = \mathbb{Z}/d\mathbb{Z}\), the resulting automorphism groups \(W_n = \left(\mathbb{Z}/d\mathbb{Z}\right)^n \wr S_n\) are the so-called \emph{main series of complex reflection groups}. These directly generalize the Weyl groups of type \(BC_n\) of Wilson's paper, for which \(d = 2\). In particular, we can apply \thref{arith_stats} to the action of \(\mathbb{Z}/d\mathbb{Z}\) on \(\mathbb{C}^*\) by rotation. In this case, the sum on the left-hand side of (\ref{groth}) is over \(\UConf_n(\mathbb{G}_m)(\mathbb{F}_q) = \Poly_n(\mathbb{F}_q^*)\), the space of square-free polynomials over \(\mathbb{F}_q\) that do not have 0 as a root. We thus obtain the following result, generalizing the work of Church-Ellenberg-Farb \cite{CEF2} (see \S6 for definitions).
\begin{thm}[\bfseries Gauss sums for \(\Conf^G(\mathbb{C}^*)\) stabilize]
\thlabel{gauss}
For any prime power \(q\), any \(d \mid q-1\), and any character polynomial \(P\) for \(\FI_{\mathbb{Z}/d\mathbb{Z}}\),
\begin{equation} \label{gauss2}
\lim_{n \to \infty} q^{-n} \sum_{f \in \Poly_n(\mathbb{F}_q^*)} P(f) = \sum_{i = 0}^\infty (-1)^i \frac{\langle P^*, H^i(\Conf^{\mathbb{Z}/d\mathbb{Z}}(\mathbb{C}^*); \mathbb{C})\rangle}{q^i}
\end{equation}
In particular both the limit on the left and the series on the right converge, and they converge to the same limit. 
\end{thm}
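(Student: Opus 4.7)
The plan is to deduce \thref{gauss} as a specialization of \thref{arith_stats} to $X = \mathbb{G}_m$ (taken over $\mathbb{Z}[1/d]$) and $G = \mathbb{Z}/d\mathbb{Z}$ acting by multiplication by $d$-th roots of unity, followed by matching the two sides of (\ref{groth}) with the corresponding sides of (\ref{gauss2}). First I would verify the hypotheses of \thref{arith_stats}: $\mathbb{G}_m$ is smooth and quasiprojective over $\mathbb{Z}[1/d]$ with geometrically connected fibers; the $\mathbb{Z}/d\mathbb{Z}$-action is free because $d$ is invertible; and the embedding $\mathbb{G}_m \hookrightarrow \mathbb{P}^1$ provides a smooth $G$-equivariant compactification, with $G$ fixing the two missing points $\{0, \infty\}$. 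The hypothesis $d \mid q-1$ ensures $\mu_d \subset \mathbb{F}_q^*$, so the $G$-action is defined over $\mathbb{F}_q$ and the residue characteristic of $\mathbb{F}_q$ is coprime to $d$.

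Next I would identify the left-hand sides. The quotient map $\mathbb{G}_m \to \mathbb{G}_m / (\mathbb{Z}/d\mathbb{Z})$ is the $d$-th power map $z \mapsto z^d$, whose target is again $\mathbb{G}_m$. Hence $\UConf_n(\mathbb{G}_m / G)(\mathbb{F}_q)$ is in natural bijection with $\Poly_n(\mathbb{F}_q^*)$, and the normalization factor $q^{-n \dim X}$ becomes $q^{-n}$. Unpacking the Frobenius datum $\sigma_y$ from \S2.2 at $y = f \in \Poly_n(\mathbb{F}_q^*)$, each irreducible factor of $f$ of degree $k$ contributes one cycle of length $k$ carrying a $\mu_d$-label equal to $s^{q^k-1}$ for any chosen $d$-th root $s$ of a root of that factor. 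This is precisely the Gauss-sum data on polynomials described in \S6, so that $P(\sigma_y) = P(f)$.

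For the right-hand sides, the essential geometric input is that $H^*_{\acute et}(\Conf^{\mathbb{Z}/d\mathbb{Z}}_n(\mathbb{G}_m)_{/\overline{\mathbb{F}_q}}; \mathbb{Q}_\ell)$ is purely Tate with explicitly known Frobenius weights. I would establish this by iterating the Fadell--Neuwirth-style fibration $\Conf^G_n(\mathbb{G}_m) \to \Conf^G_{n-1}(\mathbb{G}_m)$, whose fiber is $\mathbb{G}_m$ minus $(n-1)d$ points: the fiber is Tate, and induction on $n$ propagates purity and the weight filtration to the total space. Combining this purity with the Artin comparison isomorphism and Poincaré duality on the smooth variety $\UConf_n(\mathbb{G}_m)$ of dimension $n$, the Frobenius-trace expression on the right-hand side of (\ref{groth}) unfolds into a series weighted by $q^{-i}$, with $P$ replaced by its Poincaré-dual character polynomial $P^*$ via the exchange $\mathcal{V}_P \leftrightarrow \mathcal{V}_{P^*}$ of associated local systems.

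The main obstacle is the Tate-purity and weight computation for $H^*_{\acute et}(\Conf^G(\mathbb{G}_m); \mathbb{Q}_\ell)$ together with precise control of the Frobenius eigenvalues in each cohomological degree. Once this is secured, translating (\ref{groth}) into (\ref{gauss2}) amounts to the same Poincaré-duality bookkeeping already carried out for ordinary configuration spaces of $\mathbb{A}^1 \setminus \{0\}$ in \cite{CEF2}, and the identification of $P(\sigma_y)$ with the Gauss-sum evaluation $P(f)$ is largely definitional.
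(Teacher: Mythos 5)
Your proposal is correct and follows the same overall skeleton as the paper: specialize \thref{arith_stats} to \(X = \mathbb{G}_m\) with \(G = \mathbb{Z}/d\mathbb{Z}\), identify \(\UConf_n(\mathbb{G}_m/G)(\mathbb{F}_q)\) with \(\Poly_n(\mathbb{F}_q^*)\) and \(P(\sigma_y)\) with the Gauss-sum evaluation, and then convert the Frobenius traces on the right into the series in \(q^{-i}\). The one place you diverge is the key input that Frobenius acts on \(H^i_{\acute et}\) by exactly \(q^i\): the paper gets this for free by observing that \(\Conf^{\mathbb{Z}/d\mathbb{Z}}_n(\mathbb{G}_m)\) is the complement of a hyperplane arrangement all of whose hyperplanes (\(x_i = 0\), \(x_i = \zeta^k x_j\)) are \(\mathbb{F}_q\)-rational since \(d \mid q-1\), and then cites Bj\"orner--Ekedahl (cf.\ also Kisin--Lehrer), whereas you propose an inductive Fadell--Neuwirth fibration argument. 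Your route can be made to work, but as sketched it elides the point that makes it go through: the punctures of the fiber \(\mathbb{G}_m \setminus \{g\, x_j\}\) are global sections of the family, so the relevant higher direct images are constant (trivial monodromy) sums of Tate twists; only then does the Leray spectral sequence have \(E_2^{p,q}\) with Frobenius acting by \(q^{p+q}\), forcing all differentials to vanish by equivariance and propagating the statement. Without noting this, ``the fiber is Tate, so induction propagates purity'' is not yet an argument, since purity of fibers alone does not pin the eigenvalues down to \(q^i\) (as opposed to \(q^i\) times roots of unity) when the coefficient system is nonconstant. A second, minor point: the constant group \(\mathbb{Z}/d\mathbb{Z}\) acting by multiplication by a primitive \(d\)-th root of unity is not defined over \(\mathbb{Z}[1/d]\); as in the paper one should work over \(\mathcal{O}_k[1/d]\) with \(k = \mathbb{Q}(\zeta_d)\) (whose residue fields are exactly the \(\mathbb{F}_q\) with \(q \equiv 1 \bmod d\)), or else replace \(\mathbb{Z}/d\mathbb{Z}\) by the group scheme \(\mu_d\), which sits outside the \(\FI_G\) framework used here. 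The trade-off between the two routes is the expected one: the citation to Bj\"orner--Ekedahl is shorter and immediately uniform in \(n\), while your fibration argument is self-contained and topological in spirit, at the cost of having to verify the monodromy and degeneration statements above.
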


\thref{gauss} essentially says that the \emph{average value} of certain Gauss sums across all polynomials in \(\Poly_n(\mathbb{F}^*_q)\) always converges to the series in \(q^{-1}\) on the right. For example, let \(\chi\) be a character of \(\mathbb{Z}/(q-1)\mathbb{Z}\). Define the character polynomial \(X_i^\chi := \sum_{g \in G} \chi(g) X^g_i\). Then
\begin{gather} \begin{aligned} \label{X1_pcount}
\lim_{n \to \infty} q^{-n} &\sum_{f \in \Poly_n(\mathbb{F}_q^*)} \sum_{\substack{\alpha \ne \beta \in \mathbb{F}_q\\f(\alpha) = f(\beta) = 0}} \chi(\alpha)\chi(\beta)^{-1}  \\
&= \sum_i (-1)^i \frac{\langle X_1^{\overline{\chi}} X_1^{\chi}, H^i(\Conf^{\mathbb{Z}/(q-1)\mathbb{Z}}(\mathbb{C}^*); \mathbb{Q}(\zeta_{q-1}))\rangle}{q^i} = -\frac{1}{q} + \frac{5}{q^2} + \cdots
\end{aligned} \end{gather}
That is, the average value of the Gauss sum obtained by applying \(\chi\) to each quotient of pairs of linear factors of \(f\), across all \(f \in \Poly_n(\mathbb{F}_q^*)\), is equal to the series on the right-hand side of (\ref{X1_pcount}) obtained by looking at the inner product of the character polynomial \(X_1^{\overline{\chi}} X_1^{\chi}\) with \(H^i(\Conf^{\mathbb{Z}/(q-1)\mathbb{Z}}(\mathbb{C}^*); \mathbb{Q}(\zeta_{q-1}))\). As another example, suppose \(q\) is odd and let \(\psi = \left(\frac{-}{q^2}\right)\) be the Legendre symbol in \(\mathbb{F}_q^2\), which is \(1\) or \(-1\) according to whether its argument is a square or nonsquare in \(\mathbb{F}_{q^2}\). Then
\begin{gather} \begin{aligned} \label{X2_pcount}
\lim_{n \to \infty} q^{-n} &\sum_{f \in \Poly_n(\mathbb{F}_q^*)} \sum_{\substack{p \mid f \\ \deg(p) = 2}} \psi(\text{root}(p))  \\
&= \sum_i (-1)^i \frac{\langle X_2^{\psi}, H^i(\Conf^{\mathbb{Z}/(q-1)\mathbb{Z}}(\mathbb{C}^*); \mathbb{Q}(\zeta_{q-1}))\rangle}{q^i} = -\frac{1}{q} + \frac{3}{q^2} + \cdots
\end{aligned} \end{gather}
where \(\text{root}(p)\) denotes a root of \(p\), an irreducible degree 2 factor of \(f\), lying in \(\mathbb{F}_{q^2}\); the value \(\psi(\text{root}(p))\) turns out not to depend on the choice of root. Thus, (\ref{X2_pcount}) says that the average value of the Gauss sum obtained by applying \(\psi\) to the \emph{quadratic} factors of \(f\), across all \(f \in \Poly_n(\mathbb{F}^*_q)\), is equal to the series on the right obtained by looking at the inner product of the character polynomial \(X_2^{\psi}\) with \(H^i(\Conf^{\mathbb{Z}/(q-1)\mathbb{Z}}(\mathbb{C}^*); \mathbb{Q}(\zeta_{q-1}))\).

\begin{rem}
Just as in \cite[\S4.3]{CEF2}, it is quite likely that one can compute the left-hand side of (\ref{gauss2}) using twisted L-functions of the form
\[ L(P,s) = \sum_n \sum_{f \in \Poly_n(\mathbb{F}_q^*)} P(f) q^{-ns} \]
or by other analytic methods. Zeev Rudnick sketched such an argument for the case of (\ref{X1_pcount}) in a private communication. However, \thref{gauss} gives a topological interpretation to the left-hand side of (\ref{gauss2}). More to the point, the fact that representation stability holds for orbit configuration spaces is what suggests \thref{gauss} in the first place. In fact, the bridge to topology provided by the Grothendieck-Lefschetz trace formula gives further motivation to study such L-functions. One can often go in the other direction, and prove representation stability by means of counting points over finite fields. Chen \cite{WCh2} has done this for the usual configuration spaces, and it would be interesting to investigate this for orbit configuration spaces as well.
\end{rem}

\subsection*{Related work}
After distributing this paper, we learned of new work of Rolland-Wilson \cite{RW2} investigating the specific case of the type B/C hyperplane arrangement. This corresponds to the case \(d = 2\) in \thref{gauss}. Their proof of the necessary convergence results are by completely different methods. Our proof of the general result \thref{conf_convg} is by the same argument as Farb-Wolfson \cite{FW2}, and we prove the stronger result \thref{convg} for branched covers of \(\mathbb{P}^1\) by a topological argument. Rolland-Wilson's result, \cite[Thm 3.8]{RW2}, is proven using a novel graph-theoretic argument. It would be very interesting to see if their method could be strengthened to prove the stronger polynomial bounds of our \thref{convg}.

\subsection*{Acknowledgements}
I would like to thank Nir Gadish, Joel Specter, Weiyan Chen, Jesse Wolfson, and Sean Howe for helpful conversations. I would like to thank Zeev Rudnick for pointing out mistakes in some calculations in the initial version of this paper, and for other helpful comments. Also, I am very grateful to my advisor, Benson Farb, for all his guidance throughout the process of working on, writing, and revising this paper.

\section{\'Etale homological stability and orbit configuration spaces}

In this section we extend the theory of \'etale representation stability, developed by Farb-Wolfon \cite{FW2} for the category \(\FI\), to the category \(\FI_G\). We then apply this theory to orbit configuration spaces.

\subsection{\'Etale representation stability}
If \(G\) is a group and if \(R\) and \(S\) are sets, define a \emph{\(G\)-map} \((a, (g_i)): R \to S\) to be a pair \(a: R \to S\) and \((g_i) \in G^R\). If \((b, (h_j)): S \to T\) is another \(G\)-map, their composition is \((b \circ a, (g_i \cdot h_{a(i)}))\). Let \(\FI_G\) be the category with objects finite sets and morphisms \(G\)-maps with the function \(a\) injective; in particular, the category \(\FI = \FI_{\text{triv}}\). An \emph{\(\FI_G\)-module} \(V\) is a functor from \(\FI_G\) to \(k\Mod\) for some ring \(k\). We have
\[\Aut_{\FI_G}([n]) = G^n \rtimes S_n =: W_n\]
so that if \(V\) is an \(\FI_G\)-module, then each \(V_n\) is a \(W_n\)-module. An \(\FI_G\)-module is finitely-generated if it has a finite set of elements not contained in any proper submodule.

In \cite{Ca1}, the author developed a theory of representation stability for \(\FI_G\)-modules, extending the theory developed for \(\FI\) by Church-Ellenberg-Farb \cite{CEF} and \(\FI_{\mathbb{Z}/2\mathbb{Z}}\) by Wilson \cite{Wi2}, and building on the work of \cite{SS2} who introduced \(\FI_G\). We thus have the following results about finitely-generated \(\FI_G\)-modules, many of which were also obtained by Gadish \cite{Ga2}.

\begin{thm}[\bfseries Structural properties of finitely-generated \(\FI_G\)-modules]
\thlabel{fig_props}
Let \(G\) be a finite group and let \(V\) be an \(\FI_G\)-module over a field \(k\) of characteristic zero. If \(V\) is finitely-generated then:
\begin{description}

\item [Representation stability (\cite{GL3}):] \(V\) satisfies representation stability in the sense of \cite{CF} for the sequence \(\{V_n\}\) of \(W_n\)-modules.
\item [Isomorphism of trivial isotypics (\cite{Ca1}):] For all sufficiently large \(n\), the map \(V_n \to V_{n+1}\) given by averaging the structure maps induces an isomorphism
\[ V_n^{W_n} \xrightarrow \simeq V_{n+1}^{W_{n+1}} \]
\item [Tensor products (\cite{Ca1}):] If \(W\) is another finitely-generated \(\FI_G\)-module, and \(G\) is finite, then \(V \otimes W\) is finitely-generated.
\item [Character polynomials (\cite{Ca1}):] If \(k\) is a splitting field for \(G\), then there is a character polynomial \(P_V\) so that
\[ \chi_{V_n} = P_V \text{ for } n \gg 0. \]
\end{description}
\end{thm}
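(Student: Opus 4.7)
The plan is to handle the four bullet points separately, since each is logically independent of the others, and to reduce as much as possible to known statements about ordinary $\FI$-modules (extending them equivariantly to $\FI_G$ using the structure $\Aut_{\FI_G}([n]) = G^n \rtimes S_n$).

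For the \textbf{representation stability} bullet I would first show that if $V$ is generated in degrees $\le d$, then as a $W_n$-module $V_n$ is spanned by the $W_n$-orbit of the image of the structure maps from $V_m$ with $m \le d$. The irreducibles of $W_n$ are parametrized by partition-valued functions $\underline\lambda$ on $\Irr(G)$ with $\|\underline\lambda\| = n$, and there is a natural ``padding'' convention $\underline\lambda[n]$ that extends $\underline\lambda$ by adjoining boxes labeled by the trivial representation of $G$. The key point is a branching/restriction rule for $W_{n+1} \downarrow W_n$: an irreducible $L(\underline\mu)$ restricts to a sum of $L(\underline\mu')$ with $\underline\mu'$ obtained by removing one box in one partition. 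Iterating this, together with the generation bound, forces the multiplicity of each padded irreducible in $V_n$ to be eventually constant in $n$, which is exactly the CF-style representation stability.

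For the \textbf{trivial isotypic} bullet I would note that, granted representation stability, the multiplicity of the trivial representation in $V_n$ is eventually constant. It then suffices to show that the averaging map $V_n^{W_n} \to V_{n+1}^{W_{n+1}}$ is injective for large $n$. I would pick one structure map $\iota\colon V_n\to V_{n+1}$ and average its image over $W_{n+1}$: an invariant vector $v \in V_n^{W_n}$ that maps to $0$ after averaging must have its image $\iota(v)$ contained in a sum of non-trivial isotypic components, and a direct computation on the (small) number of $W_{n+1}$-cosets shows this forces $v=0$ once $n$ is large enough for the isotypic multiplicities to be stable.

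For the \textbf{tensor product} bullet, I would define $V\otimes W$ pointwise, $(V\otimes W)_n = V_n\otimes W_n$ with the diagonal action of the structure maps. The idea is to exhibit a surjection from $M(k)\otimes M(\ell)$ where $M(k)$ denotes the free $\FI_G$-module on $[k]$. A direct combinatorial decomposition writes $M(k)\otimes M(\ell)$ as a finite direct sum of modules of the form $M(m)$ (indexed by the way the two injections $[k]\hookrightarrow[n]$ and $[\ell]\hookrightarrow[n]$ overlap, with a finite amount of $G$-decoration data). Since each $M(m)$ is finitely generated, so is the tensor product; this is the step I expect to be the main obstacle, because making the combinatorial decomposition $G$-equivariant requires carefully tracking the group elements attached to each injection. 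Finally, for the \textbf{character polynomial} bullet, I would use the description of conjugacy classes of $W_n$ by $c(G)$-labeled cycle types: any class function on $W_n$ is a polynomial in the $c(G)$-labeled cycle-counting variables $X_i^g$. Representation stability implies that the multiplicity of each irreducible $L(\underline\lambda[n])$ in $V_n$ is eventually independent of $n$, and the character of $L(\underline\lambda[n])$ is itself given (for large $n$) by a fixed polynomial in these variables by an $\FI_G$ analogue of the CEF formula. Summing these stable multiplicities against the stable characters produces a single character polynomial $P_V$ equal to $\chi_{V_n}$ for all $n\gg 0$.
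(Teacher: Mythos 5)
First, note that the paper does not actually prove \thref{fig_props}: it is a summary of results imported from \cite{GL3} and \cite{Ca1}, and the arguments there rest on a finiteness input that your sketch never invokes. This is the genuine gap, and it sits in your first bullet. Finite generation gives you a surjection \(\bigoplus_i M(U_i) \twoheadrightarrow V\) from free \(\FI_G\)-modules, and for the free modules the branching/Pieri analysis you describe does yield eventually constant multiplicities of the padded irreducibles \(L(\underline\lambda[n])\). But your claim that ``the generation bound together with the branching rule forces'' eventual constancy for \(V\) itself is precisely the hard step, and it does not follow as stated: for an arbitrary quotient of a free module, spanning by the \(W_n\)-orbit of the images of low-degree classes only bounds multiplicities, it does not stabilize them. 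The proofs this theorem cites go through the Noetherian property of \(\FI_G\) in characteristic zero (due to \cite{SS2}/Gan--Li): the kernel of the surjection is again finitely generated, so \(V\) is finitely presented, and stability is then deduced from stability of free modules together with control of the kernel (equivalently, one can run a Church--Farb monotonicity argument for the free modules). Without some such input your representation-stability bullet does not close, and since your bullets 2 and 4 are explicitly conditioned on it, the gap propagates.

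Two smaller points. In the trivial-isotypic bullet, the ``direct computation on cosets'' forcing injectivity is not an argument; the clean route is the opposite one: finite generation makes the averaged map \(V_n^{W_n} \to V_{n+1}^{W_{n+1}}\) \emph{surjective} once \(n\) is at least the generation degree (average \(\mathrm{Avg}_{W_{n+1}}\circ\iota\) and use that \(V_{n+1}\) is spanned by the \(W_{n+1}\)-translates of \(\iota(V_n)\)), and then equality of the eventually constant trivial multiplicities upgrades this to an isomorphism. In the character-polynomial bullet, the statement you cite as ``an \(\FI_G\) analogue of the CEF formula''---that the characters of the padded irreducibles of \(W_n = G^n\rtimes S_n\) are, for large \(n\), given by a single polynomial in the \(c(G)\)-labeled cycle-counting functions---is the main content of that bullet (a wreath-product extension of Macdonald's computation), not a routine reduction; it is where \cite{Ca1} does real work. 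Your tensor-product bullet, decomposing \(M(k)\otimes M(\ell)\) into finitely many free modules indexed by \(G\)-decorated overlap data, is correct and is essentially the standard argument.
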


Farb-Wolfson define a notion of \emph{\'etale represention stability} for a co-FI scheme \(Z\) over \(\mathbb{Z}[1/N]\). They show that if there is a uniform way of normally compactifying the \(Z_n\), then the base-change maps commute with the induced FI maps. This allows them to pass from knowing that \(H^i(Z(\mathbb{C}))\) is a finitely-generated FI-module to knowing that \(H^i_{\acute et}(Z_{/\overline{\mathbb{F}}_p}; \mathbb{Q}_l)\) is a finitely-generated \emph{\(\Gal(\overline{K}/K)\)-FI-module}. Since Fulton-Macpherson constructed a normal compactification of \(\Conf(X)\), this allows them to conclude that \(\Conf(X)\) satisfies \'etale representation stability.

We want to generalize this to the orbit configuration space \(\Conf^G(X)\). However, to do so, we need to allow for the possibility that \(Z\) is only defined over some finite Galois extension of \(\mathbb{Q}\). For example, the orbit configuration space
\[ \Conf^{\mathbb{Z}/d\mathbb{Z}}_n(\mathbb{G}_m) = \{(x_i) \in \mathbb{G}_m^n \mid x_i \ne \zeta^m x_j\}\]
where \(\zeta\) is a primitive \(d\)-th root of unity and the group \(G = \mathbb{Z}/d\mathbb{Z}\) acts on \(\mathbb{G}_m\) by multiplication by \(\zeta\), is naturally defined over \(\mathbb{Z}[\zeta]\). Thus, rather than reducing modulo primes \(p\), we now have to reduce modulo prime ideals \(\mathfrak{p}\) of \(\mathcal{O}_k[1/N]\). We then have \(\mathcal{O}_k/\mathfrak{p} = \mathbb{F}_{q}\), where \(q = p^f\) with \(f\) the inertia degree of \(\mathfrak{p}\) over \(p\).

We therefore make the following definition, following \cite{FW2}.

\begin{defn}[\bfseries Smoothly compactifiable \(I\)-scheme over \(k\)]
Let \(I\) be a category and \(k\) a Galois number field. A \emph{smooth \(I\)-scheme over \(k\)} is a functor \(Z: I \to \mathbf{Schemes}\) consistings of smooth schemes over \(\mathcal{O}_k[1/N]\) for some \(N\) independent of \(i\). A smooth \(I\)-scheme is \emph{smoothly compactifiable at \(\mathfrak{p} \nmid N\)} if there is a smooth projective \(I\)-scheme \(\overline{Z}\) and a natural transformation \(Z \to \overline{Z}\) so that for all \(i \in I\), \(Z_i \to \overline{Z}_i\) is an open embedding and \(\overline{Z}_i - Z_i\) is a normal crossings divisor with good reduction at \(\mathfrak{p}\).
\end{defn}

We then have the following, generalizing \cite[Thm 2.6]{FW2}:

\begin{thm}[\bfseries Base change for \(I\)-schemes over \(k\)] \thlabel{basechange}
Let \(l\) be a prime, and \(Z: I \to \mathbf{Schemes}\) be a smooth \(I\)-scheme over \(k\) which is smoothly compactifiable at \(\mathfrak{p} \nmid N \cdot l\). Then for all morphisms \(i \to j\) in \(I\), the following diagram of ring homomorphisms commutes:
\[ \begin{diagram}
H^*_{\acute et}(Z_{j/\overline{\mathbb{F}}_q}; \mathbb{Z}_l) & \rTo^{\sim} & H^*_{sing}(Z_j(\mathbb{C}); \mathbb{Z}_l) \\
\dTo && \dTo \\
H^*_{\acute et}(Z_{i/\overline{\mathbb{F}}_q}; \mathbb{Z}_l) & \rTo^{\sim} & H^*_{sing}(Z_i(\mathbb{C}); \mathbb{Z}_l)
\end{diagram}\]
\end{thm}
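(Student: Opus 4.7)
The plan is to closely follow the argument of Farb-Wolfson \cite[Thm 2.6]{FW2}, modifying it to accommodate the base ring $\mathcal{O}_k[1/N]$ in place of $\mathbb{Z}[1/N]$. The two structural ingredients of their proof—smooth and proper base change in \'etale cohomology, and the Artin comparison theorem between \'etale and singular cohomology—both hold over arbitrary Noetherian bases and are natural in the underlying scheme, so the generalization is essentially a matter of tracking naturality in $I$.

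First, I would use the smooth compactification to reduce to the smooth proper case. For each $i \in I$, write $D_i := \overline{Z}_i \setminus Z_i$, a simple normal crossings divisor with good reduction at $\mathfrak{p}$. The cohomology $H^*_{\acute et}(Z_{i/\overline{\mathbb{F}}_q}; \mathbb{Z}_l)$ is computed by an excision/Gysin spectral sequence whose $E_1$ terms are the \'etale cohomologies of $\overline{Z}_i$ and of the iterated intersections $D_i^{(s)}$ of the components of $D_i$, and there is an entirely analogous spectral sequence on the complex-analytic side computing $H^*_{sing}(Z_i(\mathbb{C}); \mathbb{Z}_l)$. The two spectral sequences are compatible, so it suffices to produce compatible isomorphisms at the $E_1$ level.

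Second, for each stratum $D_i^{(s)}$ (taking $\overline{Z}_i$ itself as the $s = 0$ case), we have a smooth proper scheme over $\mathcal{O}_k[1/N]$ with good reduction at $\mathfrak{p}$. Smooth and proper base change, applied at a geometric point over $\mathfrak{p}$ and at a generic geometric point coming from a fixed embedding $k \hookrightarrow \mathbb{C}$, gives canonical isomorphisms
\[ H^*_{\acute et}(D^{(s)}_{i/\overline{\mathbb{F}}_q}; \mathbb{Z}_l) \xleftarrow{\sim} H^*_{\acute et}(D^{(s)}_{i/\mathcal{O}_k[1/N]}; \mathbb{Z}_l) \xrightarrow{\sim} H^*_{\acute et}(D^{(s)}_{i/\overline{k}}; \mathbb{Z}_l), \]
and Artin comparison identifies the rightmost term with $H^*_{sing}(D^{(s)}_i(\mathbb{C}); \mathbb{Z}_l)$. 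Assembling these isomorphisms across the spectral sequence produces the horizontal comparison isomorphisms of the square.

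Third, a morphism $i \to j$ in $I$ induces compatible morphisms of the compactifications and of each NCD stratum. Since smooth-proper base change and Artin comparison are natural in the scheme, the horizontal isomorphisms are natural in $I$, and the square commutes. The main obstacle—and the reason one needs the strong hypothesis of a smooth compactification as an $I$-scheme, rather than just levelwise—is precisely this requirement that the $I$-morphisms respect the NCD stratification at every level; once that is built into the setup, the rest of the argument is a formal assembly of the naturality of the two classical comparison theorems.
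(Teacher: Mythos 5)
Your proposal is correct and follows essentially the same route as the paper: the paper simply observes that the proof of Farb--Wolfson's Theorem 2.6 (itself an $I$-scheme version of Ellenberg--Venkatesh--Westerland's Theorem 7.7) applies verbatim with $p$ replaced by $\mathfrak{p}$ and $\mathbb{F}_p$ by $\mathbb{F}_q$, and the argument you sketch---reduction via the relative NCD compactification to the smooth proper strata, smooth--proper base change plus Artin comparison on each stratum, and naturality of the resulting spectral-sequence comparison in $I$---is exactly the content of that cited proof. No gaps to report.
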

\begin{proof}
The proof of \cite[Thm 2.6]{FW2} applies verbatim, since as they say, theirs is essentially an \(I\)-scheme version of \cite[Thm 7.7]{EVW}, which is more than general enough to include our extension to number fields. Just replace \(p\) with \(\mathfrak{p}\) and \(\mathbb{F}_p\) with \(\mathbb{F}_q\) everywhere in the proof of \cite[Thm 2.6]{FW2}.
\end{proof}

Now, let \(Z\) be a co-\(\FI_G\) scheme over \(k\) and \(l\) a prime. For each \(i \ge 0\) the \'etale cohomology \(H^i_{\acute et}(Z_{/\overline{k}}, \mathbb{Q}_l)\) is an \(\FI_G\)-module equipped with an action of \(\Gal(\overline{k}/k)\) commuting with the \(\FI_G\) action. Following \cite{FW2}, we call such an object a \emph{\(\Gal(\overline{k}/k)\)-\(FI_G\)-module}. Likewise, if \(\mathfrak{p} \nmid N \cdot l\) is a prime ideal of \(k\), and we put \(\mathbb{F}_q = \mathcal{O}_k/\mathfrak{p}\), then \(H^i_{\acute et}(Z_{\overline{\mathbb{F}}_q}, \mathbb{Q}_l)\) is a \(\Gal(\overline{\mathbb{F}}_q/\mathbb{F}_q)\)-FI-module. 

For \(K\) a number field or finite field of characteristic \(p\), a \(\Gal(\overline{K}/K)\)-\(\FI_G\)-module is finitely generated if it has a finite set of element not contained in any proper \(\Gal(\overline{K}/K)\)-FI-module. We can therefore use \thref{basechange} to pass from knowing finite generation of \(H^i(Z(\mathbb{C}))\) as an \(\FI_G\)-module to finite generation of \(H^i(Z_{/K}; \mathbb{Q}_l)\) as a \(\Gal(\overline{K}/K)\)-\(\FI_G\)-module.

\subsection{Stability of orbit configuration spaces}
Let \(X\) be a scheme over \(\mathcal{O}_k[1/N]\) with a free algebraic action of a finite group \(G\). The \emph{orbit configuration space} \(\Conf^G_n(X)\) is defined as the functor of points:
\[ \Conf^G_n(X)(R) = \{(x_i) \in X(R)^n \mid x_i \ne g x_j \; \forall g \in G\}\]
for any \(\mathcal{O}_k[1/N]\)-algebra \(R\). Thus \(\Conf^G_n(X)\) has the structure of a scheme over \(\mathbb{Z}[1/N]\). If \(X\) is smooth, then \(\Conf^G_n(X)\) is smooth.

The group \(W_n := G^n \rtimes S_n\) acts freely on \(\Conf^G_n(X)\), where the copies of \(G\) act on each coordinate and \(S_n\) permutes the coordinates. In fact, \(\Conf^G(X)\) forms a co-\(\FI_G\)-scheme: for an injection \(a: [m] \hookrightarrow [n]\) and \(\vec g \in G^m\), the action on \((x_i) \in \Conf^G_n(X)\) is given by
\[ (\vec g, a) (x_i) = ( g_i \cdot x_{a(i)}) \]
It thus follows that \(H^i(\Conf^G(X))\) is an \(\FI_G\)-module. In \cite[Thm 3.1]{Ca1}, we proved that \(H^i(\Conf^G(X)\) is finitely generated. We would like to use \thref{basechange} to deduce \'etale representation stability for \(\Conf^G(X)\). To do so, we need to know that \(\Conf^G(X)\) is smoothly compactifiable at all but finitely many primes. Luckily, there are general constructions of smooth compactifications for the complement of an arrangement of subvarieties. The basic construction is due to Macpherson-Procesi \cite{MP} in the complex-analytic setting, generalizing Fulton-Macpherson's \cite{FM} compactification of \(\Conf_n(X)\). This work was extended to the scheme-theoretic setting by Hu \cite{Hu} and Li \cite{Li}. Following Li, define the \emph{wonderful compactification} \(\overline{\Conf^G_n}(X)\) to be the closure of the image of the embedding
\begin{equation} \Conf^G_n(X) \hookrightarrow \prod_{\substack{i \ne j \\ g \in G}} \Bl_{\Delta_{i,g,j}}(X^n)
\end{equation}
of the product of the blowups of \(X^n\) along the diagonals \(\Delta_{i,g,j} := \{(x_i) \mid x_i = g x_j\}\).

To carry this out, we first need to know that \(X\) itself can be smoothly compactified in a manner consistent with the action of \(G\). For a smooth scheme \(X\) equipped with an action of a finite group \(G\), say that \(X\) is a \emph{smoothly compactifiable \(G\)-scheme} if there is a scheme \(\overline{X}\) with an action of \(G\) and a \(G\)-equivariant embedding \(X \hookrightarrow \overline{X}\) such that \(\overline{X} - X\) is a normal crossings divisor. Note that this is always possible in characteristic 0 by resolution of singularities, so this assumption is only needed to ensure such a compactification exists in finite characteristic.

We then have the following.
\begin{prop}[\bfseries \(\Conf^G(X)\) is smoothly compactifiable] \thlabel{cpctify}
Let \(X\) be a smooth scheme over \(\mathcal{O}_k[1/N]\) with a free action of a finite group \(G\) so that \(X\) is a smoothly compactifiable \(G\)-scheme. Then \(\overline{\Conf^G_n}(X)\) is a smoothly compactifiable co-\(\FI_G\)-scheme at any prime ideal \(\mathfrak{p} \nmid N\) of \(\mathcal{O}_k\).
\end{prop}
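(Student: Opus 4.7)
The strategy is to realize $\overline{\Conf^G_n}(X)$ as a Li-style wonderful model of the $W_n$-equivariant compactification $\overline{X}^n$ along the arrangement generated by the diagonals $\overline{\Delta}_{i,g,j}$, and then to check both smoothness with normal crossings boundary and co-$\FI_G$-functoriality.

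By hypothesis there is a smooth $G$-equivariant compactification $\overline{X}$ with $\overline{X}\smallsetminus X$ a normal crossings divisor. Then $\overline{X}^n$ is a smooth projective scheme over $\mathcal{O}_k[1/N]$ on which $W_n = G^n \rtimes S_n$ acts, and whose complement $\overline{X}^n \smallsetminus X^n$ is a simple normal crossings divisor (the union of pullbacks of $\overline{X}\smallsetminus X$ along coordinate projections). The closures $\overline{\Delta}_{i,g,j}$ of the diagonals $\Delta_{i,g,j}\subset X^n$ are smooth closed subschemes of $\overline{X}^n$. I would take $\overline{\Conf^G_n}(X)$ to be the Li wonderful model of $\overline{X}^n$ associated with the minimal building set generated by these diagonals together with the boundary components of $\overline{X}^n\smallsetminus X^n$. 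By the main theorem of \cite{Li}, this is smooth projective and contains $\Conf^G_n(X)$ as an open subscheme with simple normal crossings complement, and it agrees (up to the boundary of $\overline{X}$) with the closure of the embedding into the product of blowups displayed above.

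Good reduction at each $\mathfrak{p}\nmid N$ follows because the Li construction is a sequence of blowups along smooth closed centers, and these centers reduce to smooth closed subschemes modulo $\mathfrak{p}$: the diagonals are graphs of (restrictions of) the free $G$-action on $X$, and the boundary strata retain their NCD structure because $\overline{X}$ has good reduction at $\mathfrak{p}$ by assumption. Blowups of smooth centers commute with such base change and preserve smoothness, so the Li model of $\overline{X}^n$ over $\mathcal{O}_k[1/N]$ specializes to the Li model over the residue field $\mathbb{F}_q$.

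The most delicate step is verifying co-$\FI_G$-functoriality. For an $\FI_G$-morphism $(a,\vec g)\colon [m]\to[n]$, the twisted coordinate projection $\overline{X}^n \to \overline{X}^m$, $(x_i)\mapsto (g_i x_{a(i)})$, pulls $\overline{\Delta}_{i,h,j}\subset \overline{X}^m$ back to $\overline{\Delta}_{a(i),\,g_i h g_j^{-1},\,a(j)}\subset\overline{X}^n$, so the arrangement on $\overline{X}^m$ is carried into the arrangement on $\overline{X}^n$. By functoriality of Li's wonderful model with respect to such arrangement inclusions, the projection lifts uniquely to $\overline{\Conf^G_n}(X) \to \overline{\Conf^G_m}(X)$, extending the given co-$\FI_G$ structure map on the open subscheme $\Conf^G_n(X)$. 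The main obstacle is checking the hypotheses of Li's functoriality result, which amounts to a transversality check for the diagonals against each other and against the boundary strata; this reduces, via the freeness of the $G$-action on $X$ together with the normal crossings property of $\overline{X}\smallsetminus X$, to a routine case analysis.
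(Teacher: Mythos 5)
Your proposal follows the same overall strategy as the paper's proof---Li's wonderful compactification of the arrangement of decorated diagonals, freeness of the \(G\)-action to control the arrangement, functoriality of the construction to get the co-\(\FI_G\) structure, and good reduction away from \(N\)---so the comparison is in the execution. You compactify the ambient space first and put the boundary divisors of \(\overline{X}{}^n\) into the building set, and you deduce good reduction from the fact that blowups along smooth centers commute with base change; the paper instead takes the closure of \(\Conf^G_n(X)\) in the product of blowups of \(X^n\) along the \(\Delta_{i,g,j}\), checks smoothness and the normal crossings property on geometric fibers via \cite[Thm 1.2]{Li}, and quotes the relative normal crossings criterion of \cite[Prop 7.7]{EVW}. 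Your route makes projectivity and the role of \(\overline{X}\) explicit (the paper is silent on this), while the paper's fiberwise check avoids tracking strict transforms of centers over \(\mathcal{O}_k[1/N]\). Two cautions. First, drop (or prove separately) the claim that your model agrees with the closure in the product of the \emph{pairwise} blowups: the decorated pairwise diagonals alone are not a building set, since for example \(\Delta_{1,g,2}\cap\Delta_{2,h,3}\) is contained in \(\Delta_{1,gh,3}\) and the three minimal members containing it fail Li's transversality condition, exactly as in the classical Fulton--MacPherson situation \cite{FM}; freeness only separates \(\Delta_{i,g,j}\) from \(\Delta_{i,h,j}\), so the product must run over the full generated building set of decorated polydiagonals---your ``minimal building set generated by'' phrasing already accommodates this, but note also that \(G\) need not act freely on \(\overline{X}\setminus X\), so the cleanness of intersections of diagonal closures along the boundary is a genuine part of the case analysis you defer. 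Second, for the co-\(\FI_G\) structure it is safer not to lean on an unspecified functoriality statement from \cite{Li}: argue as the paper does, namely the twisted projection is smooth and pulls each building-set member back to a building-set member (with decoration \(g_i^{-1}hg_j\), not \(g_ihg_j^{-1}\), in the paper's conventions), so the universal property of blowups lifts it to the products of blowups and hence restricts to the closures, giving the structure maps directly.
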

\begin{proof}
First, note that \(X^n\) is a smoothly compactifiable \(G^n\)-scheme, since \(X\) is smoothly compactifiable \(G\)-scheme. We want to use Li's \cite{Li} work on wonderful compactifications, but to do so we need to check that our arrangement satisfies his hypotheses, that the arrangement is a \emph{building set} in the sense of \cite[\S2]{Li}. This follows from the fact that \(G\) acts freely on \(X\), so that diagonals \(\Delta_{i,g,j}\) and \(\Delta_{i,h,j}\) are disjoint for \(g \ne h\).

By definition (3) of \(\overline{\Conf^G_n}(X)\) as the closure in the blowup, we see that \(W_n\) acts on \(\overline{\Conf^G_n}(X)\), and that \(\overline{\Conf^G}(X)\) forms a co-\(\FI_G\)-space. Furthermore, since \(X\) is a scheme over \(R := \mathcal{O}_k[1/N]\), then (3) defines \(\overline{\Conf^G}(X)\) as a scheme over \(R\). There is a natural open embedding \(\Conf^G(X) \hookrightarrow \overline{\Conf^G}(X)\), again just given by (3). To see that the complement \(\overline{\Conf^G}(X) - \Conf^G(X)\) is a normal crossings divisor, it is enough to check this at the geometric fibers over the points of \(R\). This holds by \cite[Thm 1.2]{Li}. Likewise, it is enough to check smoothness of \(\overline{\Conf^G}(X)\) at the geometric fibers over the points of \(R\). This holds by \cite[Thm 1.2]{Li}. Since \(\overline{\Conf^G}(X)\) is smooth and \(\overline{\Conf^G}(X) - \Conf^G(X)\) is a normal crossings divisor relative to \(\Spec R\), in the sense of \cite[Prop 7.7]{EVW}, we conclude that \(\overline{\Conf^G}(X) - \Conf^G(X)\) has good reduction at each prime \(\mathfrak{p}\) of \(R\).
\end{proof}

\begin{thm}[\bfseries \'Etale representation stability for orbit configuration spaces]
\thlabel{etale_repstab}
Let \(X\) be a smooth scheme over \(\mathcal{O}_k[1/N]\) with geometrically connected fibers. Let \(G\) be a finite group acting freely on \(X\), such that \(X\) is smoothly compactifiable as a \(G\)-scheme. Let \(K\) be either a number field or an unramified function field over \(\mathcal{O}_k[1/N]\). For each \(i \ge 0\), the \(\Gal(\overline{K}/K)\)-\(\FI_G\)-module \(H^i_{\acute et}(\Conf^G(X)_{/\overline{K}}; \mathbb{Q}_l)\) is finitely generated.
\end{thm}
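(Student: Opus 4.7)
The plan is to deduce finite generation of the $\Gal(\overline{K}/K)$-$\FI_G$-module $H^i_{\acute et}(\Conf^G(X)_{/\overline{K}}; \mathbb{Q}_l)$ from the rational finite generation of the singular cohomology $\FI_G$-module $H^i(\Conf^G(X)(\mathbb{C}); \mathbb{Q})$ provided by \cite[Thm 3.1]{Ca1}. The bridge is \thref{basechange}, whose hypotheses are supplied for the co-$\FI_G$-scheme $\Conf^G(X)$ by \thref{cpctify}.

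First I would handle the case of a residue field $K = \mathbb{F}_q$ at some prime ideal $\mathfrak{p} \nmid N \cdot l$ of $\mathcal{O}_k$. Since \thref{cpctify} shows that $\Conf^G(X)$ is smoothly compactifiable at $\mathfrak{p}$, \thref{basechange} supplies an $\FI_G$-equivariant isomorphism
\[ H^i_{\acute et}(\Conf^G(X)_{/\overline{\mathbb{F}}_q}; \mathbb{Q}_l) \cong H^i_{sing}(\Conf^G(X)(\mathbb{C}); \mathbb{Q}_l). \]
The right-hand side is finitely generated as an $\FI_G$-module since \cite[Thm 3.1]{Ca1} gives rational finite generation and tensoring along the flat $\mathbb{Q}$-algebra $\mathbb{Q}_l$ preserves this, hence so is the left. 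For $K$ a number field (resp.\ an unramified function field over $\mathcal{O}_k[1/N]$), I would pick a prime $\mathfrak{P}$ of $K$ over some $\mathfrak{p} \nmid N \cdot l$ at which $\overline{\Conf^G}(X)$ has good reduction and combine the Artin comparison isomorphism with smooth and proper base change (applied levelwise to the $\mathcal{O}_k[1/N]$-form of the compactification produced by \thref{cpctify}) to obtain an $\FI_G$-equivariant zigzag identifying $H^i_{\acute et}(\Conf^G(X)_{/\overline{K}}; \mathbb{Q}_l)$ with $H^i_{sing}(\Conf^G(X)(\mathbb{C}); \mathbb{Q}_l)$.

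Finally, to upgrade $\FI_G$-finite generation to $\Gal(\overline{K}/K)$-$\FI_G$-finite generation, note that every $\Gal$-$\FI_G$-submodule is in particular an $\FI_G$-submodule; hence any finite set not contained in a proper $\FI_G$-submodule is also not contained in a proper $\Gal$-$\FI_G$-submodule, so the same generating set works. The main obstacle I anticipate is verifying $\FI_G$-equivariance of the zigzag of comparison and specialization maps in the general $K$ case: over $\overline{\mathbb{F}}_q$ this is precisely the content of \thref{basechange}, but over $\overline{K}$ one must separately check that these maps, applied levelwise, intertwine the transition morphisms of the co-$\FI_G$-scheme $\Conf^G(X)$. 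This follows from the functoriality of étale cohomology in the scheme, together with the fact that the smooth compactification provided by \thref{cpctify} is itself a co-$\FI_G$-scheme over $\mathcal{O}_k[1/N]$, so specialization and comparison commute with every co-$\FI_G$ morphism by construction.
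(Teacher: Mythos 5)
Your proposal is correct and follows essentially the same route as the paper: finite generation of $H^i(\Conf^G(X)(\mathbb{C});\mathbb{Q})$ from \cite[Thm 3.1]{Ca1}, Artin comparison in the characteristic-zero case, and \thref{cpctify} plus \thref{basechange} in the residue-field case (your extra observations about $\mathbb{Q}\to\mathbb{Q}_l$ base change and about $\Gal$-$\FI_G$ submodules being $\FI_G$-submodules are correct and implicit in the paper). The only difference is cosmetic: for $K$ a number field the paper invokes Artin comparison directly rather than routing through a prime of good reduction, and it explicitly verifies the hypotheses of \cite[Thm 3.1]{Ca1} (connectedness, orientability, dimension, finite-dimensional cohomology), which you should state rather than leave tacit.
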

\begin{proof}
Again, we follow \cite[Thm 2.8]{FW2}. Since \(X\) has geometrically connected fibers, \(X(\mathbb{C})\) is connected. Since \(X\) is smoothly compactifiable, \(H^*(X; \mathbb{Q})\) is finitely generated. Since \(X(\mathbb{C})\) is a complex manifold, it is orientable and has real dimension at least 2. Thus \(X\) satisfies the hypotheses of \cite[Thm 3.1]{Ca1}, and so \(H^i(X(\mathbb{C}); \mathbb{Q})\) is a finitely generated \(\FI_G\)-module.

If \(K\) is a number field, we conclude immediately from Artin's comparison theorem that \(H^i_{\acute et}(\Conf^G(X)_{/K}; \mathbb{Q}_l)\) is a finitely generated \(\Gal(\overline{K}/K)\)-\(\FI_G\)-module. If \(K\) is a finite field, by \thref{cpctify} \(\Conf^G(X)\) is a smoothly compactifiable co-\(\FI_G\)-scheme, and so by \thref{basechange}, the conclusion likewise follows.
\end{proof}

\section{Convergence}
Recall that the conjugacy classes of \(W_n = G^n \rtimes S_n\) are given by a cycle decomposition of size \(n\) (i.e., the usual conjugacy classes of \(S_n\)), but where each cycle is decorated by a conjugacy class of \(G\), where we write \(c(G)\) for the set of conjugacy classes of \(G\). A \emph{character polynomial for \(\FI_G\)} is a polynomial
\[ P \in k[\{X_i^C \mid C \in c(G)\}] \]
This determines a class function \(P_n : W_n \to k\) for all \(n\), where \(X_i^C\) counts the number of \(C\)-decorated \(i\)-cycles of a given conjugacy class of \(W_n\). Recall from \thref{fig_props} that if \(V\) is a finitely-generated \(\FI_G\)-module, then the characters \(\chi_{V_n}\) are eventually given by a single character polynomial for all large \(n\). Furthermore, for another character polynomial \(P\), we know that the inner product \(\langle P_n, V_n\rangle_{W_n}\) is eventually independent of \(n\). We put
\[ \langle P, V\rangle = \lim_{n \to \infty} \left\langle P, V_n \right\rangle_{W_n} \]
for the limiting multiplicity.

Thus if \(Z\) is a co-\(\FI_G\)-scheme that satisfies \'etale representation stability, there is a stable inner product \(\langle P, H^i_{\acute et}(Z; \mathbb{Q}_l)\rangle\), eventually independent of \(n\). However, for our applications we need to bound how these inner products grow in \(i\). The following proposition is helpful in doing so:

\begin{prop}\thlabel{convg} For any graded \(\FI_G\)-module \(V^*\), the following are equivalent: \begin{enumerate}
\item For each character polynomial \(P\), \(|\langle P_n, V^i_n \rangle|\) is bounded subexponentially in \(i\) and uniformly bounded in \(n\).
\item For every \(a\), the dimension \(\dim (V^i_n)^{W_{n-a}}\) is bounded subexponentially in \(i\) and uniformly bounded in \(n\).
\end{enumerate}
\end{prop}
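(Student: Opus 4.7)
The plan is to relate the two conditions via Frobenius reciprocity together with an explicit change of basis in the ring of \(\FI_G\) character polynomials.

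For the direction (1) \(\Rightarrow\) (2), I would first identify the character of \(\Ind_{W_{n-a}}^{W_n}\mathbf{1}\). The coset space \(W_n/W_{n-a}\) is naturally in bijection with the \(W_n\)-orbit of the trivially-decorated initial segment \(((1,e),\ldots,(a,e))\) inside \(([n]\times G)^a\), so the number of cosets fixed by \((\vec g, \sigma)\in W_n\) equals the number of ordered \(a\)-tuples of distinct fixed points of \(\sigma\) whose \(G\)-label in \(\vec g\) is the identity. This count is the falling factorial
\[ Q_a := X_1^e(X_1^e - 1)\cdots(X_1^e - a + 1) = a!\binom{X_1^e}{a}, \]
a character polynomial independent of \(n\) (valid for all \(n \ge a\)), where \(X_1^e\) counts \(1\)-cycles decorated by the trivial conjugacy class. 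Frobenius reciprocity then gives
\[ \dim (V^i_n)^{W_{n-a}} = \langle Q_a, V^i_n \rangle_{W_n} \qquad (n \geq a), \]
so applying (1) with \(P = Q_a\) yields (2). The finitely many boundary cases \(n < a\) can be absorbed into the constants of the subexponential bound.

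For (2) \(\Rightarrow\) (1), the strategy is to express an arbitrary character polynomial \(P\) as a finite \(\mathbb{Q}\)-linear combination of characters of permutation representations \(\Ind_{H}^{W_n}\mathbf{1}\), where each subgroup \(H\) contains a copy of \(W_{n-a(P)}\) for some integer \(a(P)\) depending only on \(P\). After expanding \(P\) in the basis of falling-factorial monomials \(\prod_{i,C}\binom{X_i^C}{a_{i,C}}\), I would realize each basis element as the character of \(\Ind_{H_{\underline a}}^{W_n}\mathbf{1}\) for \(H_{\underline a}\) the stabilizer of a configuration consisting of \(a_{i,C}\) unordered \(G\)-decorated \(i\)-cycles of class \(C\) (for each pair \((i,C)\)), together with its complementary \(W_{n - a(P)}\)-factor, where \(a(P) := \sum_{i,C} i\,a_{i,C}\). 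Since \(W_{n - a(P)} \subseteq H_{\underline a}\), we have \(\dim(V^i_n)^{H_{\underline a}} \leq \dim(V^i_n)^{W_{n - a(P)}}\), and Frobenius reciprocity then expresses \(\langle P, V^i_n\rangle\) as a finite rational linear combination of invariant dimensions, each bounded subexponentially in \(i\) uniformly in \(n\) by (2).

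The main obstacle is the second direction, and specifically the precise combinatorial realization of each falling-factorial monomial as the character of an induced permutation representation from an identifiable subgroup containing \(W_{n-a}\). This is complicated by the fact that the decoration of an \(i\)-cycle of \((\vec g,\sigma)\) is the conjugacy class in \(G\) of the product of its \(g\)-entries, rather than of any single entry, so the combinatorial set-up requires careful bookkeeping and an inversion of the change of basis between monomials in \(\{X_i^C\}\) and falling-factorial products, analogous to the Stirling-number identities used in the \(\FI\) case in \cite{CEF}. Once these identities are in hand the remaining estimates reduce to a routine rational linear-combination bound.
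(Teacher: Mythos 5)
Your direction (1)\(\Rightarrow\)(2) is essentially correct, but the fixed-point count is off by a constant: a coset of \(W_{n-a}\) in \(W_n\) corresponds to an ordered \(a\)-tuple of distinct points of \([n]\) each carrying an \emph{arbitrary} \(G\)-decoration, and such a tuple is fixed by \((\vec g,\sigma)\) exactly when its underlying points are fixed points of \(\sigma\) labelled by the identity in \(\vec g\), the decoration then being unconstrained. Hence the character of \(\Ind_{W_{n-a}}^{W_n}\mathbf{1}\) is \(|G|^a Q_a\), not \(Q_a\) (check the value at the identity: the index is \(|G|^a\,n!/(n-a)!\)), so the correct identity is \(\dim (V^i_n)^{W_{n-a}} = |G|^a\langle Q_a, V^i_n\rangle\). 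This is harmless for subexponential bounds, as is the treatment of small \(n\), provided you note that for \(n< a\) one can invoke (1) with the degree-\(n\) falling factorial (whose only nonzero value is at the identity) to bound \(\dim V_n^i\) itself subexponentially in \(i\); "absorbing into constants" alone does not suffice since \(i\) still ranges over all of \(\mathbb{N}\) for those finitely many \(n\).

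The genuine gap is in (2)\(\Rightarrow\)(1): the pivotal claim that each falling-factorial monomial \(\prod_{i,C}\binom{X_i^C}{a_{i,C}}\) \emph{is} the character of \(\Ind_{H_{\underline a}}^{W_n}\mathbf{1}\) is false. Already for \(G\) trivial, \(X_2\) vanishes at the identity but is not identically zero, so it is not the character of any representation; the character of the induced trivial module from the stabilizer of one unordered \(2\)-cycle configuration is \(X_2 + \binom{X_1}{2}\), because a configuration can be preserved by \(\sigma\) without being realized by a cycle of \(\sigma\). In general the fixed-configuration count equals your monomial \emph{plus} lower-order terms. What your argument actually needs is the weaker statement that the permutation characters \(\Ind_{H}^{W_n}\mathbf{1}\), for \(H\) stabilizers of decorated-cycle configurations containing \(W_{n-a}\), span the character polynomials of degree at most \(a\) over \(\mathbb{Q}\) with coefficients independent of \(n\); this follows from the triangularity of the fixed-point counts with respect to the monomial basis (an inclusion–exclusion/M\"obius inversion over configurations, carried out with the decoration of a cycle taken as the class of the product of its \(G\)-labels), and \emph{not} from the Stirling-type inversion between monomials and falling factorials that you cite, which relates two bases of the polynomial ring and does not address the discrepancy between \(\prod\binom{X_i^C}{a_{i,C}}\) and the induced character. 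Until that triangular expansion is written down, the second implication is unproved. For what it is worth, the paper does not supply these combinatorics either: its proof is a two-line citation of the corresponding argument in \cite{CEF2}, replacing \(S_n\) by \(W_n\) and cycles by \(c(G)\)-labelled cycles, so your outline is in the same spirit as the argument being invoked, but the key identity is exactly the part still to be supplied.
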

\begin{proof}
\item This follows from \cite[Defn 3.12]{CEF2}. Their proof applies almost verbatim, we just need to replace \(S_n\) with \(W_n\) and ``cycles'' with ``\(c(G)\)-labeled cycles''.
\end{proof}

If a graded \(\FI_G\)-algebra \(V^*\) satisfies these two equivalent conditions, we say it is \emph{convergent}. \thref{conf_convg} thus states that, under appropriate conditions, the \(\FI_G\) algebra \(H^*(\Conf^G(X); \mathbb{Q})\) is convergent.

\begin{proof}[Proof of \thref{conf_convg}]
Farb-Wolfson \cite[Thm 3.4]{FW2} proved this for \(G = 1\) by using the Leray spectral sequence to reduce to convergence of \(H^*(X^n) \otimes H^*(\Conf(\mathbb{R}^d))\). We will use the same technique to reduce to their result.

Let \(A(G,d)_n\) be the graded commutative \(\mathbb{Q}\)-algebra generated by \(\{e_{a,g,b} \mid 1 \le a \ne b \le n, g \in G\}\), each of degree \(d-1\), modulo the following relations:
\begin{align*}
e_{a,g,b} &= (-1)^d e_{b,g^{-1},a} \\
e_{a,g,b}^2 &= 0 \\
e_{a,g,b} \wedge e_{b,h,c} &= (e_{a,g,b} - e_{b,h,c}) \wedge e_{a,gh,c}
\end{align*}
where the action of \((\vec h, \sigma) \in W_n\) is given by
\[ (\vec h, \sigma) \cdot e_{a,g,b} = e_{\sigma(a), h_a g h_b^{-1}, \sigma(b)} \]
If we let \(A(d)_n = A(1,d)_n\), then there is an embedding \(A(d)_n \hookrightarrow A(G,d)_n\) given by \(e_{a,b} \mapsto e_{a,e,b}\). In \cite[Thm 3.6]{Ca1}, we proved that \(A(G,d)_n\) is spanned by the \(G^n\)-translates of \(A(d)_n\).

In \cite[Thm 3.1]{Ca1}, we proved that \(H^*(\Conf^G_n(X); \mathbb{Q})\) is isomorphic as a graded \(W_n\)-representation, to a subquotient of
\[ H^*(X^n; \mathbb{Q}) \otimes A(G,d)_n. \]
Thus, it suffices to bound
\[ \left(H^*(X^n; \mathbb{Q}) \otimes A(G,d)_n\right)^{W_{n-a}} = \left(\left(H^*(X^n; \mathbb{Q}) \otimes A(G,d)_n\right)^{G^{n-a}}\right)^{S_{n-a}} \]
We know that \(\left(H^*(X^n; \mathbb{Q}) \otimes A(G,d)_n\right)^{G^{n-a}}\) is a subquotient of \(H^*(X^n; \mathbb{Q}) \otimes \left(A(G,d)_n\right)^{G^{n-a}}\) as \(S_n\)-representations. Next, since \(A(G,d)_n\) is spanned by the \(G^n\) translates of \(A(d)_n\), we know that \(\left(A(G,d)_n\right)^{G^{n-a}}\) will be spanned by \(G^a\) translates of \(A(d)_n\). In conclusion,
\begin{equation} \label{convg_calc}
\dim \left(H^*(X^n; \mathbb{Q}) \otimes A(G,d)_n\right)^{W_{n-a}} \le |G|^a \dim \left(H^*(X^n; \mathbb{Q}) \otimes A(d)_n\right)^{S_{n-a}}
\end{equation}
But the right-hand side of (\ref{convg_calc}) is just (a constant times) exactly the term that Farb-Wolfson considered in \cite[Thm 3.4]{FW2} and proved was convergent. Therefore \(H^*(\Conf^G(X); \mathbb{Q})\) is convergent.
\end{proof}

\subsection{Polynomial bounds for branched covers of \(\mathbb{P}^1\)}
Consider the specific case where \(X\) is a surface with punctures and \(X/G\) is a sphere with punctures. For example, take any holomorphic branched cover of the Riemann sphere and remove the branch points. In this case, we can do better than proving that \(H^i(\Conf^G(X))^{W_{n-a}}\) is bounded by a subexponential function in \(i\): we can prove it is actually bounded by a polynomial in \(i\). Moreover, our proof is a straightforward topological one, and avoids relying on the detailed calculations of Farb-Wolfson. This also provides a partial answer to a question of Chen \cite{WCh2}, who proved that for \(X = \mathbb{C}\), \(\langle H^i(\Conf(X)), P\rangle\) is actually given by a quasipolynomial in \(i\), and asked for a topological proof of this fact. We believe that such quasipolynomiality probably exists for a more general class of manifolds. We hope all this provides some motivation for our reproving a special case of \thref{conf_convg} by other means.

So take \(X = \mathcal{S} - \{p_1, \dots, p_b\}\), with a free action of the finite group \(G\), and \(X/G = S^2 - \{q_0, \dots, q_c\} \approx \mathbb{C} - \{q_1, \dots, q_c\}\), where we assume there is at least one branch point, so \(c \ge 0\). To begin, notice that by transfer,
\[H^i(\Conf^G_n(X); \mathbb{Q})^{W_{n-a}} \cong H^i(\Conf^G_n(X)/W_{n-a}; \mathbb{Q}) \]
For any fixed \(a < n\), put \(Y_{n,a} = \Conf^G_n(X)/W_{n-a}\), so that
\[ Y_{n,a} =  \left\{\{x_1, \dots, x_{n-a}\}, (z_1, \dots, z_a) \in \UConf_{n-a}(X/G) \times \Conf^G_a(X) \mid x_i \ne \pi(z_j)\right\}\]
There is thus a fiber bundle
\begin{equation} \label{convg_fibr}
\UConf_{n-a}(X/G - \{\pi(z_1), \dots \pi(z_a)\}) \hookrightarrow Y_{n,a} \twoheadrightarrow \Conf^G_a(X)
\end{equation}
and we have the following.
\begin{prop} \thlabel{monodromy}
In the fiber bundle (\ref{convg_fibr}), the monodromy action
\[\pi_1(\Conf^G_a(X)) \curvearrowright H^*(\UConf_{n-a}(X/G - \{\pi(z_1), \dots \pi(z_a)\}) \]
is trivial.
\end{prop}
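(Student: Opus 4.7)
The plan is to reduce to the classical ordered Fadell--Neuwirth fibration and then apply a Leray--Hirsch argument. First, observe that a loop in $\Conf^G_a(X)$ based at $(z_1,\ldots,z_a)$ returns to the same ordered tuple in $X$, so its coordinatewise $\pi$-image is an honest loop in $\Conf_a(X/G)$ based at $(\pi(z_1),\ldots,\pi(z_a))$; this gives a natural map $\pi_1(\Conf^G_a(X)) \to \pi_1(\Conf_a(X/G))$, through which the monodromy of (\ref{convg_fibr}) factors. Moreover (\ref{convg_fibr}) is the pullback along this map of the $S_{n-a}$-quotient on fibers of the Fadell--Neuwirth fibration
\[ \Conf_{n-a}(X/G - \{p_1,\ldots,p_a\}) \hookrightarrow \Conf_n(X/G) \twoheadrightarrow \Conf_a(X/G) \]
(projection to the last $a$ coordinates). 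Since both pullback and passage to $S_{n-a}$-invariants preserve triviality of monodromy, it suffices to show that $\pi_1(\Conf_a(X/G))$ acts trivially on $H^*(\Conf_{n-a}(X/G - \{p_l\}); \mathbb{Q})$.

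For this I would invoke Leray--Hirsch: if the restriction $H^*(\Conf_n(X/G); \mathbb{Q}) \to H^*(\Conf_{n-a}(X/G - \{p_l\}); \mathbb{Q})$ along the fiber inclusion is surjective, then every fiber class extends to the total space, and so is monodromy-invariant by functoriality of restriction. Since $X/G = S^2 - \{q_0,\ldots,q_c\}$ is a punctured sphere, the cohomology of its (ordered) configuration spaces admits the classical presentation due to Arnold, Cohen--Taylor, and Totaro: $H^*(\Conf_m(X/G); \mathbb{Q})$ is generated in degree one by Arnold/linking classes $\omega_{ij}$ together with winding classes $\alpha_{i,q_l}$, while $H^*(\Conf_{n-a}(X/G - \{p_l\}); \mathbb{Q})$ has in addition winding classes $\beta_{i,p_l}$ around the extra punctures. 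Since $\omega_{ij}$ is by definition the pullback of the standard generator of $H^1(X/G \times X/G - \Delta; \mathbb{Q})$ under the coordinate projection to factors $i$ and $j$, the fiber inclusion $x_{n-a+l} = p_l$ carries $\omega_{i,n-a+l}$ precisely to $\beta_{i,p_l}$, while $\omega_{ij}$ and $\alpha_{i,q_l}$ (for $i,j \le n-a$) pull back to their namesakes. Hence every generator of the fiber cohomology is hit.

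The main technical point is this surjectivity, which reduces to the identification $\omega_{i,n-a+l}|_{\text{fiber}} = \beta_{i,p_l}$ --- essentially immediate from the definition of $\omega$ as a Thom/pullback class. Once surjectivity is in hand, Leray--Hirsch gives trivial monodromy on $H^*(\Conf_{n-a}(X/G - \{p_l\}); \mathbb{Q})$; taking $S_{n-a}$-invariants yields trivial monodromy on $H^*(\UConf_{n-a}(X/G - \{p_l\}); \mathbb{Q})$; and pulling back via $\Conf^G_a(X) \to \Conf_a(X/G)$ gives the desired statement for (\ref{convg_fibr}).
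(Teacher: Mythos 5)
Your argument is correct, and it takes a genuinely different route from the paper. You factor the bundle (\ref{convg_fibr}) as the pullback, along the coordinatewise projection \(\Conf^G_a(X) \to \Conf_a(X/G)\), of the fiberwise \(S_{n-a}\)-quotient of the Fadell--Neuwirth fibration \(\Conf_n(X/G) \to \Conf_a(X/G)\), and then kill the monodromy of the latter by a Leray--Hirsch-type observation: the image of the restriction \(H^*(\Conf_n(X/G);\mathbb{Q}) \to H^*(\Conf_{n-a}(X/G - \{p_l\});\mathbb{Q})\) automatically lies in the \(\pi_1\)-invariants, and this restriction is surjective because the fiber cohomology is generated in degree one (Arnold/Orlik--Solomon) and each new winding class \(\beta_{i,p_l}\) is the restriction of the Arnold class \(\omega_{i,n-a+l}\). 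The paper instead embeds \(\pi_1(\Conf^G_a(X))\) into the mapping class group of the punctured plane with marked points, realizes the monodromy by point-pushing diffeomorphisms, and checks triviality directly on the explicit de Rham generators of \(H^1\), using that the image consists of pure braids so the classes \(dw_i/(w_i - \pi(z_l))\) are not permuted. Both proofs ultimately rest on the same two inputs --- transfer to the ordered cover and degree-one generation of the hyperplane-arrangement cohomology --- but your pullback-plus-surjectivity argument avoids the mapping-class-group/point-pushing analysis entirely and is arguably more standard and more readily generalized, while the paper's computation additionally shows exactly how non-pure braids act (by permuting the puncture classes), which explains where pureness is used. Two small points to make explicit if you write this up: the identification \(H^*(\UConf_{n-a};\mathbb{Q}) \cong H^*(\Conf_{n-a};\mathbb{Q})^{S_{n-a}}\) is compatible with the monodromy actions (via the fiberwise covering map and transfer), which is what lets you pass from the ordered to the unordered fiber; and your surjectivity argument should be phrased so that it also covers the case \(c=0\), i.e.\ \(X/G \approx \mathbb{C}\) with no classes \(\alpha_{i,q_k}\), where it works verbatim.
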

\begin{proof}
There is an inclusion
\begin{align*}
\pi_1(\Conf^G_a(X), (z_1, \dots, z_a)) &\subset \pi_1(\UConf_a(X/G), \{\pi(z_1), \dots, \pi(z_a)\}) \\
&= \pi_1(\UConf_a(\mathbb{C} - \{q_1, \dots, q_c\}), \{\pi(z_1), \dots, \pi(z_a)\}) \\
&=  \text{Mod}(\mathbb{C} - \{q_1, \dots, q_c\}, \{\pi(z_1), \dots, \pi(z_a)\})
\end{align*}
into the mapping class group with marked points, via the well-known isomorphism between braid groups and mapping class groups. That is, the group of homeomorphisms up to isotopy of \(\mathbb{C}\) that fix \(\{q_1, \dots, q_c\}\) pointwise and leave \(\{\pi(z_1), \dots, \pi(z_a)\}\) invariant, but possibly permute them. Thus \(\pi_1(\Conf^G_a(\mathbb{C}^*))\) acts (up to homotopy and conjugacy) by ``point-pushing'' diffeomorphisms on the underlying space \(X/G - \{\pi(z_1), \dots, \pi(z_a)\}\), and the action of \(\pi_1(\Conf^G_a(X))\) on the fiber of (\ref{convg_fibr}) is just the diagonal action.

Now, \(\UConf_{n-a}(\mathbb{C} - \{q_1, \dots, q_c, \pi(z_1), \dots \pi(z_a)\})\) has a normal \(S_{n-a}\)-cover
\[\Conf_{n-a}(\mathbb{C} - \{q_1, \dots, q_c, \pi(z_1), \dots \pi(z_a)\}) \to \UConf_{n-a}(\mathbb{C} - \{q_1, \dots, q_c, \pi(z_1), \dots \pi(z_a)\}) \]
which is a hyperplane arrangement. By transfer,
\begin{gather*}
H^*(\UConf_{n-a}(\mathbb{C} - \{q_1, \dots, q_c, \pi(z_1), \dots \pi(z_a)\}); \mathbb{Q}) \\
\cong \left(H^*(\Conf_{n-a}(\mathbb{C} - \{q_1, \dots, q_c, \pi(z_1), \dots \pi(z_a)\}); \mathbb{Q})\right)^{S_{n-a}}
\end{gather*}
and by Orlik-Solomon \cite{OS}, \(H^*(\Conf_{n-a}(\mathbb{C} - \{q_1, \dots, q_c, z_1^d, \dots z_a^d\}); \mathbb{Q})\) is generated as an algebra by \(H^1\). So it is enough to determine the action of \(\pi_1(\UConf_a(\mathbb{C} - \{q_1, \dots, q_c\}))\) on \(H^1(\Conf_{n-a}(\mathbb{C} - \{q_1, \dots, q_c, \pi(z_1), \dots \pi(z_a)\}); \mathbb{Q})\). But \(H^1\) is spanned by the de Rham cohomology classes
\begin{equation} \label{dR}
\left\{\frac{d(w_i - w_j)}{w_i - w_j} \,\middle|\, 1 \le i < j \le n-a\right\}, \left\{\frac{dw_i}{w_i - q_k} \,\middle|\, \substack{1 \le i \le n-a \\ 1 \le k \le c}\right\}, \left\{\frac{dw_i}{w_i - \pi(z_l)} \,\middle|\, \substack{1 \le i \le n-a \\ 1 \le l \le a}\right\}
\end{equation}
By tracking how points are pushed by a mapping class, we see that a braid in \(\pi_1(\UConf_a(\mathbb{C} - \{q_1, \dots, q_c\}))\) acts on the first two sets of cohomology classes in (\ref{dR}) trivially, and on the third set in (\ref{dR}) by permuting the \(\{\pi(z_1), \dots \pi(z_a)\}\) under the homomorphism \(\pi_1(\UConf_a(\mathbb{C} - \{q_1, \dots, q_c\})) \to S_a\). But \(\pi_1(\Conf_a(X/G))\) is precisely the kernel of this homomorphism, the subgroup of \emph{pure} braids. So the action of \(\pi_1(\Conf^G_a(X))\) on \(H^1(\Conf_{n-a}(\mathbb{C} - \{q_1, \dots, q_c, z_1^d, \dots z_a^d\}))\), and therefore as we have argued, on \(H^*(\UConf_{n-a}(\mathbb{C} - \{q_1, \dots, q_c, \pi(z_1), \dots \pi(z_a)\}); \mathbb{Q})\), is trivial.
\end{proof}

\begin{thm}
Let \(X\) be a surface with punctures, with a free action of a finite group \(G\), such that \(X/G\) is a sphere with punctures. Then for fixed \(a\), the function
\[i \mapsto \dim H^i(\Conf^G_n(\mathbb{C}^*); \mathbb{Q})^{W_{n-a}}\]
is bounded by a polynomial in \(i\) and uniformly bounded in \(n\).
\end{thm}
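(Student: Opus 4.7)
The plan is to combine transfer, the fibration (\ref{convg_fibr}), and \thref{monodromy} into a Leray--Serre estimate, and then reduce to the known polynomial growth of the rational cohomology of unordered configuration spaces of punctured planes. By transfer, $H^i(\Conf^G_n(X); \mathbb{Q})^{W_{n-a}} \cong H^i(Y_{n,a}; \mathbb{Q})$. Since $X/G$ is a sphere with at least one puncture, $X/G - \{\pi(z_1), \ldots, \pi(z_a)\}$ is homeomorphic to $\mathbb{C}$ with $s := c+a$ points removed, so the fiber of (\ref{convg_fibr}) is $\UConf_m(\mathbb{C} \setminus \{p_1, \ldots, p_s\})$ with $m := n-a$.

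By \thref{monodromy}, the Leray--Serre spectral sequence of (\ref{convg_fibr}) has
\[ E_2^{p,q} = H^p(\Conf^G_a(X); \mathbb{Q}) \otimes H^q(\UConf_m(\mathbb{C} \setminus \{p_1, \ldots, p_s\}); \mathbb{Q}), \]
so
\[ \dim H^i(Y_{n,a}; \mathbb{Q}) \;\le\; \sum_{p+q=i} \dim H^p(\Conf^G_a(X); \mathbb{Q}) \cdot \dim H^q(\UConf_m(\mathbb{C} \setminus \{p_1, \ldots, p_s\}); \mathbb{Q}). \]
Since $a$ is fixed, $\Conf^G_a(X)$ is a single manifold of real dimension $2a$; its total Betti number is bounded by a constant $C_a$ and its cohomology vanishes above degree $2a$. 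It therefore suffices to bound $\dim H^q(\UConf_m(\mathbb{C} \setminus \{p_1, \ldots, p_s\}); \mathbb{Q})$ by a polynomial in $q$ independent of $m$.

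For this fiber bound, the key input is rational homological stability for unordered configuration spaces of open orientable surfaces, together with polynomial growth of the stable cohomology. Specifically, by classical results of Segal, McDuff, and Church--Ellenberg--Farb, for an open connected orientable surface $M$ with finite-dimensional rational cohomology, $\dim H^q(\UConf_m(M); \mathbb{Q})$ is bounded uniformly in $m$ by the stable value $\dim H^q(\UConf_\infty(M); \mathbb{Q})$. The scanning equivalence $\UConf_\infty(M) \simeq \Omega^2 \Sigma^2 M_+$ presents $H^*(\UConf_\infty(M); \mathbb{Q})$ as a finitely generated graded-commutative algebra, and for $M = \mathbb{C} \setminus \{p_1, \ldots, p_s\}$ this stable dimension is a polynomial in $q$ of degree depending on $s$.

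The main obstacle is upgrading the stable-range bound (valid for $m \ge 2q$) to a bound valid for all $m$: in the unstable range $m < 2q$ the stabilization map need not be injective on cohomology. To close this gap I would argue directly from the Fadell--Neuwirth fibration structure of $\Conf_m(\mathbb{C} \setminus \{p_1, \ldots, p_s\})$, whose Orlik--Solomon algebra has the explicit Poincar\'e polynomial $\prod_{k=0}^{m-1}(1 + (s+k)t)$, together with a Lehrer--Solomon--type analysis of the $S_m$-representation structure, to count $S_m$-invariant classes directly; the small number of indices appearing in a length-$q$ monomial forces the count of orbit types contributing to $(H^q(\Conf_m))^{S_m}$ to be polynomial in $q$ regardless of $m$. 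Feeding the resulting polynomial bound into the Leray--Serre estimate above yields the desired polynomial bound on $\dim H^i(\Conf^G_n(X); \mathbb{Q})^{W_{n-a}}$ uniform in $n$.
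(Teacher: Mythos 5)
Your first half — transfer, the fibration (\ref{convg_fibr}), triviality of the monodromy via \thref{monodromy}, and the Leray--Serre estimate reducing everything to a bound on \(\dim H^q(\UConf_m(\mathbb{C}\setminus\{p_1,\dots,p_s\});\mathbb{Q})\) that is polynomial in \(q\) and uniform in \(m\) — is exactly the paper's argument. The problem is the second half: you do not actually prove that fiber bound, and the mechanism you propose cannot produce it. Counting \(S_m\)-orbit types of degree-\(q\) monomials in the Orlik--Solomon generators bounds \(\dim(H^q(\Conf_m))^{S_m}\) only by the number of isomorphism classes of such labeled configurations of \(q\) edges; this already exceeds the number of partitions of \(q\) (take disjoint unions of ``path'' monomials \(e_{i_1 i_2}e_{i_2 i_3}\cdots\) of varying lengths), so it grows like \(e^{c\sqrt q}\) --- superpolynomially. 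That is precisely the CEF-type subexponential bound that this theorem is meant to improve on; getting a polynomial bound requires exploiting relations/vanishing in cohomology, not just counting a spanning set, so ``orbit types are few because few indices appear'' is not a valid step. Your stable-range input is also shaky as stated: the scanning equivalence \(\UConf_\infty(M)\simeq\Omega^2\Sigma^2 M_+\) holds for \(M=\mathbb{R}^2\), whereas for a punctured plane the stable homology is that of a compactly supported section space, so the claimed finite generation/polynomial growth of the stable algebra is asserted rather than proved; and even granting it, homological stability only identifies \(H^q(\UConf_m)\) with the stable value for \(m\) large relative to \(q\), leaving the unstable range (which you correctly flag) unresolved.

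The paper closes exactly this gap by an elementary induction on the number of punctures that treats all \((q,m)\) simultaneously: writing \([a]=\{1,\dots,a\}\), one has the decomposition \(\UConf_n(\mathbb{C}\setminus[a-1])=\UConf_n(\mathbb{C}\setminus[a])\sqcup\UConf_{n-1}(\mathbb{C}\setminus[a])\), whose Thom--Gysin long exact sequence gives
\[
\dim H^i(\UConf_n(\mathbb{C}\setminus[a]);\mathbb{Q})\le \dim H^i(\UConf_n(\mathbb{C}\setminus[a-1]);\mathbb{Q})+\dim H^{i-1}(\UConf_{n-1}(\mathbb{C}\setminus[a]);\mathbb{Q}),
\]
and iterating this in \(i\) together with induction on \(a\) (starting from \(H^i(\UConf_n(\mathbb{C});\mathbb{Q})=0\) for \(i\ge 2\)) yields \(\dim H^i(\UConf_n(\mathbb{C}\setminus[a]);\mathbb{Q})=O(i^{a-1})\) uniformly in \(n\), with uniform boundedness in \(n\) supplied by Church's homological stability. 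You would need to either adopt such an argument or find a genuinely different proof of the polynomial bound; as written, your proposal establishes at best the subexponential estimate and therefore does not prove the stated theorem.
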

This is a big improvement on \cite{CEF2}, where they were only able to bound the dimension by \(O(e^{\sqrt{i}})\).
\begin{proof}

Because the action is trivial by \thref{monodromy}, in the Serre spectral sequence for the bundle (\ref{convg_fibr}), the coefficients are constant. Thus the \(E_2\) page of this spectral sequence is just
\begin{align*}
E_2^{p,q} &= H^p(\Conf^G_a(X); H^q(\UConf_{n-a}(X/G - \{\pi(z_1), \dots \pi(z_a)\}); \mathbb{Q})) \\
&= H^p(\Conf^G_a(X)) \otimes H^q(\UConf_{n-a}(X/G - \{\pi(z_1), \dots \pi(z_a)\}); \mathbb{Q}) \implies H^{p+q}(Y_{n,a}; \mathbb{Q})
\end{align*}
Abutment of spectral sequences means that \(H^{i}(Y_{n,a}; \mathbb{Q})\) is some subquotient of the direct sum of the diagonal terms
\[\bigoplus_{p + q = i} H^p(\Conf^G_a(X); \mathbb{Q}) \otimes H^q(\UConf_{n-a}(X/G - \{\pi(z_1), \dots \pi(z_a)\}); \mathbb{Q})\]
Since taking subquotients only decreasing the vector space dimension, we obtain
\begin{align*} \dim H^{i}(Y_{n,a}; \mathbb{Q}) &\le \sum_{p+q = i} \dim H^p(\Conf^G_a(X); \mathbb{Q}) \cdot \dim H^q(\UConf_{n-a}(X/G - \{\pi(z_1), \dots \pi(z_a)\}); \mathbb{Q}) \\
&= O\left(\dim H^i(\UConf_{n-a}(X/G - \{\pi(z_1), \dots \pi(z_a)\}); \mathbb{Q})\right)
\end{align*}
because \(H^p(\Conf^G_a(X); \mathbb{Q}) = 0\) for \(p > 2a\), since it is a \(2a\)-dimensional manifold. So it is enough to show that
\[ \dim H^i(\UConf_{n-a}(\mathbb{C} - \{q_1, \dots, q_c, \pi(z_1), \dots \pi(z_a)\}); \mathbb{Q}) \]
is bounded by a polynomial in \(i\) and is uniformly bounded in \(n\). We can simplify this description by redefining \(n := n - a\) and \(a := c + a\), and picking any \(a\) points, so we are left to analyze \(\dim H^i(\UConf_n(\mathbb{C} \setminus [a]))\), where \([a] = \{1, \dots, a\}\).

Notice that \(\mathbb{C} \setminus [a]\) is a connected orientable manifold of dimension 2 with finite-dimensional homology, so it satisfies the hypotheses of homological stability for unordered configuration spaces \cite[Thm 1]{Chu}. Therefore \(\dim H^i(\UConf_n(\mathbb{C} \setminus [a]); \mathbb{Q})\) is independent of \(n\) for \(n \gg 0\), and thus uniformly bounded in \(n\). We claim that in fact
\[\dim H^i(\UConf_n(\mathbb{C} \setminus [a]); \mathbb{Q}) = O(i^{a-1})\]
Indeed, notice that we have a decomposition:
\begin{equation} \label{decomp}
\UConf_n(\mathbb{C} \setminus [a-1]) = \UConf_n(\mathbb{C} \setminus [a]) \sqcup \UConf_{n-1}(\mathbb{C} \setminus [a])
\end{equation}
This decomposition just says that an unordered collection of \(n\) distinct points in \(\mathbb{C} \setminus [a-1]\) either does not contain the \(a\)-th point, or it does and so is determined by the \(n-1\) other points. \(\UConf_n(\mathbb{C} \setminus [a])\) is an open complex submanifold of \(\UConf_n(\mathbb{C} \setminus [a-1])\), and so there is a Thom-Gysin exact sequence associated to (\ref{decomp}):
\begin{gather} \label{les} \begin{aligned}
\dots &\to H^{i-2}(\UConf_{n-1}(\mathbb{C} \setminus [a]); \mathbb{Q}) \to H^i(\UConf_n(\mathbb{C} \setminus [a-1]); \mathbb{Q}) \to H^i(\UConf_n(\mathbb{C} \setminus [a]); \mathbb{Q}) \\
&\to H^{i-1}(\UConf_{n-1}(\mathbb{C} \setminus [a]); \mathbb{Q}) \to \dots
\end{aligned} \end{gather}
By exactness of (\ref{les}):
\begin{equation} \label{ex_bd}
\dim H^i(\UConf_n(\mathbb{C} \setminus [a]); \mathbb{Q}) \le \dim H^i(\UConf_n(\mathbb{C} \setminus [a-1]); \mathbb{Q}) + \dim H^{i-1}(\UConf_{n-1}(\mathbb{C} \setminus [a]); \mathbb{Q}) \end{equation}
By repeatedly plugging in the third term of (\ref{ex_bd}) back into the left-hand side of (\ref{ex_bd}), we obtain
\begin{gather} \label{ind_bd} \begin{aligned}
\dim H^i(\UConf_n(\mathbb{C} \setminus [a]); \mathbb{Q}) \le &\dim H^i(\UConf_n(\mathbb{C} \setminus [a-1]); \mathbb{Q}) + \dim H^{i-1}(\UConf_{n-1}(\mathbb{C} \setminus [a-1]); \mathbb{Q}) \\
&+ \dots + \dim H^1(\UConf_1 (\mathbb{C} \setminus [a-1]); \mathbb{Q}) \\
&= O\left(i \cdot \dim H^i(\UConf_n(\mathbb{C} \setminus [a-1]); \mathbb{Q})\right)
\end{aligned} \end{gather}
Now we induct on \(a\). Notice that for \(a = 0\), we know \(H^i(\UConf_n(\mathbb{C}); \mathbb{Q}) = 0\) once \(i \ge 2\). Therefore for \(a = 1\), this implies that \(\dim H^i(\UConf_n(\mathbb{C}^*); \mathbb{Q}) = O(1)\). By induction, we can assume that \(\dim H^i(\UConf_n(\mathbb{C} \setminus [a-1]); \mathbb{Q}) =  O(i^{a-2})\). The bound (\ref{ind_bd}) then tells us that
\[ \dim H^i(\UConf_n(\mathbb{C} \setminus [a]); \mathbb{Q}) = O\left(i \cdot \dim H^i(\UConf_n(\mathbb{C} \setminus [a-1]); \mathbb{Q})\right) = O(i \cdot i^{a-2}) = O(i^{a-1}) \]
as desired.
\end{proof}

\section{Arithmetic statistics}
Let \(Z\) be a smooth quasiprojective scheme over \(\mathcal{O}_k[1/N]\). Suppose that \(W_n = G^n \rtimes S_n\) acts freely on \(Z\) by automorphisms, and let \(Y = Z/W_n\) be the quotient, which is known to be a scheme.

For any prime \(\mathfrak{p}\) of \(\mathcal{O}_k[1/N]\), we have \(\mathcal{O}_k/\mathfrak{p} \cong \mathbb{F}_q\), where \(q\) is a power of the rational prime \(p\) under \(\mathfrak{p}\). We can then base-change \(Y\) to \(\overline{\mathbb{F}}_q\). The geometric Frobenius \(\Frob_q\) then acts on \(Y_{/\overline{\mathbb{F}}_q}\). The fixed-point set of \(\Frob_q\) is exactly \(Y(\mathbb{F}_q)\).

Fix a prime \(l \ne p\), and let \(L\) be a splitting field for \(G\) over \(\mathbb{Q}_l\). Since all the irreducible representations of \(G\) are defined over \(L\), there is a natural correspondence between finite-dimensional representations of \(W_n\) over \(L\) and finite-dimensional constructible \(L\)-sheaves on \(Y\) that become trivial when pulled back to \(Z\).

Given a representation \(V\) of \(W_n\), let \(\chi_V\) be its character and \(\mathcal{V}\) the associated sheaf on \(Y\). For any point \(y \in Y(\mathbb{F}_q)\), since \(Frob_q\) fixes \(y\), then \(\Frob_q\) acts on the stalk \(\mathcal{V}_y\), which is isomorphic to \(V\). This action determines an element \(\sigma_y \in W_n\) up to conjugacy, so that \(\tr(\Frob_q : \mathcal{V}_y) = \chi_V(\sigma_y)\). The Grothendieck-Lefschetz trace formula says that
\[ \sum_{y \in Y(\mathbb{F}_q)} \tr(\Frob_q: \mathcal{V}_y) = \sum_i (-1)^i \tr\left(\Frob_q : H^i_{\acute et, c}(Y_{/\overline{\mathbb{F}}_q}; \mathcal{V})\right) \]

We then have the following chain of equalities, as in \cite{CEF2} and \cite{FW2}:
\begin{align*}
H^i_{\acute et, c}(Y_{/\overline{\mathbb{F}}_q}; \mathcal{V}) &\cong (H^i_{\acute et, c}(Z; \pi^* \mathcal{V}))^{W_n} & \text{by transfer}\\
&\cong \left(H^i_{\acute et, c}(Z; L) \otimes V\right)^{W_n} & \text{by triviality of pullback} \\
&\cong \left(H^{2\dim Z - i}_{\acute et}(Z; L(-\dim Z))^* \otimes V\right)^{W_n} &\text{by Poincar\'e duality} \\
&\cong \langle H^{2\dim Z - i}_{\acute et}(Z; L(-\dim Z)), V \rangle_{W_n}
\end{align*}
and so we obtain
\begin{equation}
\label{single_grot}
\sum_{y \in Y(\mathbb{F}_q)} \chi_V(\sigma_y) = q^{\dim Z} \sum_i (-1)^i \tr\left( \Frob_q : \langle H^i_{\acute et}(Z; L), V \rangle_{W_n}\right)
\end{equation}

We would like to apply (\ref{single_grot}) to a sequence of schemes \(Z_n\) that form a co-\(\FI_G\)-scheme, and then let \(n \to \infty\). To make this work, we need to know that \(Z\) satisfies \'etale representation stability, and that \(H^*(Z)\) is convergent in the sense of \S3. Following \cite{FW2}, we have the following.

\begin{thm}
\thlabel{fig_stats}
Suppose that \(Z\) is a smooth quasiprojective co-\(\FI_G\)-scheme over \(\mathcal{O}_k[1/N]\) such that \(H^i(Z_{/\overline{\mathbb{F}}_q}; \mathbb{Q}_l)\) is a finitely-generated \(\Gal(\overline{\mathbb{F}}_q/\mathbb{F}_q)\)-\(\FI_G\)-module, and that \(H^*(Z; \mathbb{Q}_l)\) is convergent. Then for any \(\FI_G\) character polynomial \(P\),
\begin{equation}
\lim_{n \to \infty} q^{-\dim Z_n} \sum_{y \in Y_n(\mathbb{F}_q)} P(\sigma_y) = \sum_{i=0}^\infty (-1)^i \tr\left( \Frob_q : \langle H^i_{\acute et}(Z; L), P \rangle\right)
\end{equation}
\end{thm}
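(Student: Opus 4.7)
The plan is to apply the single-$n$ Grothendieck-Lefschetz identity (\ref{single_grot}) to the virtual $W_n$-representation with character $P_n$, divide through by $q^{\dim Z_n}$, and pass to the limit $n \to \infty$ via dominated convergence. The argument requires two ingredients: stabilization of each $i$-th term of the right-hand side as $n$ grows, and a uniform-in-$n$ summable majorant for these terms.

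For stabilization, fix $i \ge 0$. Since $H^i_{\acute et}(Z; L)$ is finitely generated as a $\Gal(\overline{\mathbb{F}}_q/\mathbb{F}_q)$-$\FI_G$-module, \thref{fig_props} applied to inner products with the character polynomial $P$ shows that $\langle H^i_{\acute et}(Z_n; L), P_n\rangle_{W_n}$ stabilizes in $n$ to a finite-dimensional $\Gal$-module, which by definition is $\langle H^i_{\acute et}(Z; L), P\rangle$. For all $n$ sufficiently large (depending on $i$), the trace $\tr(\Frob_q : \langle H^i(Z_n; L), P_n\rangle_{W_n})$ equals $\tr(\Frob_q : \langle H^i(Z; L), P\rangle)$.

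For the majorant, combine the identification $H^i_{\acute et, c}(Y_n; \mathcal{V}_n) \cong \langle H^{2\dim Z_n - i}_{\acute et}(Z_n; L(-\dim Z_n)), V_n\rangle_{W_n}$ used to derive (\ref{single_grot}) --- where $Y_n = Z_n/W_n$ is smooth since $W_n$ acts freely --- with Deligne's weight bound $|\alpha| \le q^{i/2}$ for Frobenius eigenvalues on $H^i_{\acute et, c}$ of a smooth scheme. After re-indexing $j = 2\dim Z_n - i$ and absorbing the Tate-twist factor $q^{\dim Z_n}$ into the $q^{\dim Z_n}$ normalization, the $j$-th term of the normalized right-hand side of (\ref{single_grot}) is bounded in absolute value by $q^{-j/2} \cdot \dim \langle H^j_{\acute et}(Z_n; L), V_n\rangle_{W_n}$. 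By the convergence hypothesis on $H^*(Z; L)$, this dimension is sub-exponential in $j$ uniformly in $n$, so for any $\epsilon > 0$ the $j$-th term is eventually bounded by $C_\epsilon ((1+\epsilon)/\sqrt q)^j$; choosing $\epsilon < \sqrt q - 1$ (possible since $q \ge 2$) produces a summable majorant independent of $n$. Dominated convergence then permits exchange of $\lim_n$ with $\sum_j$, yielding both the convergence of the right-hand side series and its equality with the left-hand side limit.

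The main obstacle is producing the $q^{-j/2}$ decay above: a direct estimate using only the weight filtration on ordinary $H^j(Z; L)$ (whose weights on smooth non-proper $Z$ can reach $2j$) would give only $|\tr(\Frob_q : \langle H^j, V\rangle)| \le q^j \dim \langle H^j, V\rangle$, which combined with sub-exponential multiplicity bounds is not summable. The needed decay must instead be harvested from the compactly-supported cohomology of $Y_n$ via the Poincar\'e-duality identification underlying (\ref{single_grot}); with that in hand, the rest of the argument parallels \cite[Prop 4.3]{FW2}.
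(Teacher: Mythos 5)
Your proposal is correct and follows essentially the same route as the paper: apply the single-$n$ Grothendieck--Lefschetz identity (\ref{single_grot}) to $P$, use finite generation to stabilize each $i$-th term, and use the convergence hypothesis together with Deligne's weight bound (harvested, exactly as you say, from $H^*_{c}$ of $Y_n$ via the Poincar\'e-duality step behind (\ref{single_grot})) to get a uniform summable majorant $q^{-i/2}F_P(i)$. Your appeal to dominated convergence is just a repackaging of the paper's explicit estimate of $|A_n - B_n|$ via the stable range $N(n,P)\to\infty$, so there is no substantive difference.
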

\begin{proof}
Since each \(Z_n\) is smooth quasiprojective, we can apply (\ref{single_grot}) to it. By linearity, we can replace a representation \(V\) of \(W_n\) with a virtual representation given by a character polynomial \(P\), so we obtain
\begin{equation}
q^{-\dim Z_n} \sum_{y \in Y_n(\mathbb{F}_q)} P(\sigma_y) = \sum_{i=0}^{2\dim Z_n} (-1)^i \tr\left( \Frob_q : \langle H^i_{\acute et}(Z_n; L), P \rangle_{W_n}\right)
\end{equation}
Call this sum \(A_n\). Furthermore, let
\[ B_n = \sum_{i=0}^{2\dim Z_n} (-1)^i \tr\left( \Frob_q : \langle H^i_{\acute et}(Z; L), P \rangle\right) \]
Our goal is thus to show that \(\lim_{n \to \infty} A_n = \lim_{n \to \infty} B_n\): that is, first of all that both sequences converge, and that their limits are equal.

By our assumption that \(H^*(Z)\) is convergent, we know that there is a function \(F_P(i)\) which is subexponential in \(i\), such that for all \(n\),
\[ |\langle H^i_{\acute et}(Z_n; L), P\rangle_{W_n}| \le F_P(i) \]
and thus in particular (taking \(n\) large enough)
\[ |\langle H^i_{\acute et}(Z; L), P\rangle| \le F_P(i) \]
Furthermore, by Deligne \cite[Thm 1.6]{De}, we know that
\[ \left|\tr\left( \Frob_q : \langle H^i_{\acute et}(Z_n; L), P \rangle\right)\right| \le q^{-i/2} \left|\langle H^i_{\acute et}(Z_n; L), P \rangle\right| \]
We thus have
\begin{align*}
|A_n| &\le \sum_{i=0}^{2\dim Z_n} \left|\tr\left( \Frob_q : \langle H^i_{\acute et}(Z_n; L), P \rangle_{W_n}\right)\right| \\
&\le \sum_{i=0}^{2\dim Z_n} q^{-i/2} \left|\langle H^i_{\acute et}(Z_n; L), P \rangle\right| \\
&\le \sum_{i=0}^{2\dim Z_n} q^{-i/2} F_P(i)
\end{align*}
For exactly the same reason, \(|B_n| \le \sum_{i=0}^{2\dim Z_n} q^{-i/2} F_P(i)\). Since \(F_P(i)\) is subexponential in \(i\), this means that both \(A_n\) and \(B_n\) converge. 

It remains to show that \(\lim_{n \to \infty} A_n - B_n = 0\). Let \(N(n,P)\) be the number such that
\[\langle H^i(Z_n), P\rangle_{W_n} = \langle H^i(Z), P\rangle \;\; \text{ for all } i \le N(n,P). \]
We thus have
\begin{align*}
|B_n - A_n| &\le \sum_{i=0}^{2\dim Z_n} q^{-i/2} \left| \langle H^i(Z), P\rangle - \langle H^i(Z_n), P\rangle_{W_n} \right| \\
&= \sum_{i=N(n,P)+1}^{2\dim Z_n} q^{-i/2} \left| \langle H^i(Z), P\rangle - \langle H^i(Z_n), P\rangle_{W_n} \right| \\
&= \sum_{i=N(n,P)+1}^{2\dim Z_n} 2q^{-i/2} F_P(i)
\end{align*}
Since \(N(n,P) \to \infty\) as \(n \to \infty\), and \(F_P(i)\) is subexponential in \(i\), we conclude that \(|B_n - A_n|\) becomes arbitrarily small as \(n \to \infty\).
\end{proof}

We can now apply this to \(\Conf^G_n(X)\) to obtain \thref{arith_stats}.
\begin{proof}[Proof of \thref{arith_stats}]
By \thref{etale_repstab}, \(H^i(\Conf^G(X)_{/\overline{\mathbb{F}}_q}\) is a finitely-generated \(\Gal(\overline{\mathbb{F}}_q/\mathbb{F}_q)\)-\(\FI_G\)-module. By \thref{conf_convg} \(H^*(\Conf^G(X))\) is convergent. We thus conclude by \thref{fig_stats}.
\end{proof}

\section{Point-counts for polynomials}
For special choices of \(X\) in \thref{arith_stats}, we can give an interpretation to the left-hand side of (\ref{groth}) in terms of point-counts of polynomials over \(\mathbb{F}_q\). 

\subsection{Complex reflection groups and Gauss sums}
The first example is where \(X = \mathbb{G}_m\) and \(G = \mathbb{Z}/d\mathbb{Z}\) acting by multiplication by a \(d\)-th root of unity. In order for this action to be well-defined, we need to consider \(X\) as a scheme over \(\mathbb{Z}[\zeta_d]\). However, notice that here the action is not free: if we look at the fiber of \(X\) over a prime dividing \(d\), then \(\mathbb{Z}/d\mathbb{Z}\) will act trivially, because these are the primes that ramify in \(\mathbb{Z}[\zeta_d]\). Thus, to satisfy our hypotheses that we have a free action of \(G\), we consider \(X\) as a scheme over \(\mathcal{O}_k[1/d]\), where \(k = \mathbb{Q}(\zeta)\) is the cyclotomic field. Thus the finite fields \(\mathbb{F}_q\) we consider will satisfy \(q \equiv 1 \mod d\), since \(\mathbb{F}_q\) is a residue field of \(\mathbb{Z}[\zeta]\).

In this case, we have
\[ \Conf^{\mathbb{Z}/d\mathbb{Z}}_n(\mathbb{G}_m)(R) = \{(x_i) \in R^n \mid x_i \ne 0, x_i \ne \zeta^k x_j\} \]
so that \(\Conf^G_n(X)\) is the complement of a hyperplane arrangement. The arithmetic and \'etale cohomology of this arrangement was studied by Kisin-Lehrer in \cite{KL}, where they obtained formulas for the equivariant Poincar\'e polynomial of \(\Conf^{\mathbb{Z}/d\mathbb{Z}}_n(\mathbb{G}_m)\).

By Bj\"orner-Ekedahl \cite{BE}, the action of \(\Frob_q\) on \(H^i_{\acute et, c}(Z; \mathbb{Q}_l)\) is given by multiplication by \(q^i\), and thus (again, by Poincar\'e duality)
\[\tr\left(\Frob_q : \langle H^i_{\acute et}(\Conf^G_n(X)), P\rangle\right) = q^{-i} \langle H^i_{\acute et}(\Conf^G_n(X)), P\rangle \]
This lets us compute the right-hand side of (\ref{groth}) explicitly in this case:
\begin{equation} \label{crg_groth}
\lim_{n \to \infty} q^{-n} \sum_{f \in \Poly_n(\mathbb{F}_q^*)} P(y) = \sum_{i=0}^\infty (-1)^i q^{-i} \langle H^i(\Conf^{\mathbb{Z}/d\mathbb{Z}}(\mathbb{C}^*); \mathbb{C}), P \rangle
\end{equation}
since we know by \thref{etale_repstab} that \(H^i_{\acute et}(\Conf^{\mathbb{Z}/d\mathbb{Z}}(\mathbb{G}_m)_{/\overline{\mathbb{F}}_q}; \mathbb{Q}_l) \cong H^i(\Conf^{\mathbb{Z}/d\mathbb{Z}}(\mathbb{C}^*); \mathbb{Q}_l)\).

Notice that \(d \mid q - 1\), so \(\mathbb{Z}/d\mathbb{Z}\) is a quotient of \(\mathbb{Z}/(q-1)\mathbb{Z}\). Thus any representation of \(\mathbb{Z}/d\mathbb{Z} \wr S_n\) lifts to a representation of \(\mathbb{Z}/(q-1)\mathbb{Z} \wr S_n\), so we can always interpret the left-hand side of (\ref{crg_groth}) as a statement about representations of the single group \(\mathbb{Z}/(q-1)\mathbb{Z} \wr S_n\). On the other hand, as remarked in \cite[\S3]{Ca1}, there is a Galois cover
\[\Conf^{\mathbb{Z}/(q-1)\mathbb{Z}}_n(\mathbb{C}^*) \to \Conf^{\mathbb{Z}/d\mathbb{Z}}_n(\mathbb{C}^*)\]
with deck group \((\mathbb{Z}/\frac{q-1}{r}\mathbb{Z})^n\) and so by transfer,
\[ H^i(\Conf^{\mathbb{Z}/d\mathbb{Z}}_n(\mathbb{C}^*); \mathbb{Q}) = \left(H^i(\Conf^{\mathbb{Z}/(q-1)\mathbb{Z}}_n(\mathbb{C}^*); \mathbb{Q})\right)^{(\mathbb{Z}/\frac{q-1}{r}\mathbb{Z})^n} \]
and thus the right-hand side of (\ref{crg_groth}) is the same whether we consider \(V\) as a representation of \(\mathbb{Z}/d\mathbb{Z} \wr S_n\) or \(\mathbb{Z}/(q-1)\mathbb{Z} \wr S_n\). Therefore we lose nothing if we simply assume that in fact \(d = q-1\).

Now we give some number-theoretic meaning to the left-hand side of (\ref{crg_groth}). An element \(f \in \Poly_n(\mathbb{F}_q^*)\) is a polynomial in \(\mathbb{F}_q[T]\) that does not have 0 as a root. The roots of \(f(T)\) are sitting in some extension field of \(\mathbb{F}_q\), and the \(d\)-th roots of those roots possibly in some even higher extension field. The permutation \(\sigma_f\) is the action of \(\Frob_q\) on all these \(d\)-th roots, which permutes the actual roots (think of these as the columns each containing a set of \(d\)-th roots), and then further permutes the \(d\)-th roots cyclically. This precisely gives an element of \(G^n \rtimes S_n\) (up to conjugacy).

Recall that a \emph{Gauss sum} is a certain sum of roots of unity obtained by summing values of a character of the unit group of a finite ring. Now, suppose that \(\chi\) is an irreducible character of \(G\). Write
\[X^\chi_i = \sum_{g \in G} \chi(g) X^g_i.\]
For each \(i\), the \(\{X^\chi_i\}_{\chi \in \widehat{G}}\) have the same span as \(\{X^g_i\}_{g \in G}\), and the \(\{X^\chi_i\}\) are more natural to use here. 

Since \(d = q - 1\), there is an isomorphism \(\mathbb{F}^*_q \cong G\), and in fact such a surjection \(\mathbb{F}_{q^k}^* \to G\) for any \(k\). It therefore makes sense to talk about applying \(\chi\) to elements of \(\overline{\mathbb{F}}_q^*\). For a given \(f \in \Poly_n(\mathbb{F}^*_q)\), consider first decomposing \(f\) into irreducibles factors over \(\mathbb{F}_q\), and then each of these into linear factors over \(\overline{\mathbb{F}}_q\); since none are zero, all the roots actually lie in \(\overline{\mathbb{F}}_q^*\). For a given irreducible degree-\(i\) factor \(p\) of \(f\), since \(\Frob_q\) acts transitively on the roots of \(p\) over \(\overline{\mathbb{F}}_q\), and since \(q \equiv 1 \mod d\), then \(\chi\) takes the same value on each of the roots. It is then straightforward to calculate that
\[
X^\chi_i(\sigma_f) = \sum_{\deg(p) = i} \chi(\text{root}(p))
\]
where the sum is taken over all irreducible factors \(p\) of \(f\) of degree \(i\), and \(\text{root}(p)\) denotes any of the roots of \(p\) in \(\overline{\mathbb{F}}_q\). Thus, \(X^\chi_i\) is a Gauss sum of \(\chi\) applied to the roots of degree-\(i\) irreducible factors of \(f\). A general character polynomial is generated as a ring by the \(X^\chi_i\)'s, so this says how to interpret the left-hand-side. Thus, (\ref{crg_groth}) says that the \emph{average value} of any such Gauss sum across all polynomials in \(\Poly_n(\mathbb{F}^*_q)\) always converges to the series in \(q^{-1}\) on the right-hand side of (\ref{crg_groth}). In particular, the decomposition of \(H^1\) and \(H^2\) determined in \cite[\S4.1]{Ca1} allows us to compute the examples (\ref{X1_pcount}) and (\ref{X2_pcount}) from the introduction.

\subsection{Counting solutions to \(f = g^d - t h^d\)}
Consider counting the number of monic polynomials \(f \in \mathbb{F}_q[t]\) with distinct roots of the form \(f = g^d - t h^d\) for \(g, h \in \mathbb{F}_q[t]\). If we assume \(q \equiv 1\) mod d, so that there is a primitive \(d\)-th root of unit \(\zeta \in \mathbb{F}_q\), then we can factor
\begin{align*}
f(t^d) = g^d(t^d) - t^d b^d(t^d) = (a(t^d) - t b(t^d)) (a(t^d) - \zeta t b(t^d)) \cdots (a(t^d) - \zeta^{d-1} b(t^d))
\end{align*}
where each \(a(t^d) - \zeta^i t b(t^d) \in \mathbb{F}_q[t]\). Notice that if \(x\) is a root of \(a(t^d) - t b(t^d)\), then \(\zeta^{n-i} x\) is a root of \(a(t^d) - \zeta^i t b(t^d)\). Thus, \(f(t^d)\) has such a factorization if and only if the \(d\)-th roots of the roots of \(f\) all lie in different Galois orbits. This is therefore equivalent to being able to write \(f = g^d + t h^d\). Also notice that this is equivalent to \(f\) being a norm in the cyclic extension \(\mathbb{F}[\sqrt[d] t] / \mathbb{F}[t]\).

Now, assume further that \(f\) is monic with distinct roots, and that 0 is not a root of \(f\), so that \(f \in \Poly_n(\mathbb{F}_q^*)\). Then as we have seen, we get a group element \(\sigma_f \in W_n\), up to conjugacy. The condition that the \(d\)-th roots of the roots of \(f\) all lie in different Galois orbits precisely says that \(\sigma_f\) is in a conjugacy class where each cycle is decorated with the identity element of \(G\). Let \(\delta_n\) be the indicator function for such conjugacy classes. Then as we have just argued, \(\delta_n(\sigma_f) = 1\) precisely when we can write \(f = g^d + t h^d\).

Then (\ref{single_grot}) tells us that
\begin{gather*}\#\{f \in \Poly_n(\mathbb{F}_q^*) \mid f = g^d + t h^d \text{ for } g,h \in \mathbb{F}_q[t]\} = \sum_{f \in \Poly_n(\mathbb{F}_q^*)} \delta_n(\sigma_f) \\
= q^n \sum_{i = 0}^{2n} (-1)^i q^{-i} \langle H^i(\Conf^{\mathbb{Z}/d\mathbb{Z}}(\mathbb{C}^*)), \delta_n\rangle
\end{gather*}
This formula was investigated in great detail in the case \(d = 2\) by Matei \cite{Mat}, in which case the equation \(f = g^2 - t h^2\) had been studied in a number of previous papers, e.g. \cite{BSSW}.

It is very important to note that while each \(\delta_n\) is a class function of \(W_n\), and thus in the span of character polynomials for \(W_n\), there is \emph{no} character polynomial \(P\) such that \(P_n = \delta_n\). This can be seen, for instance, by the fact that a single character polynomial can only depend on cycles of bounded length, while \(\delta_n\) depends on all the cycles of \(g \in W_n\). Thus we cannot apply \thref{arith_stats} and take the limit as \(n \to \infty\).

Indeed, the calculations in \cite{Mat} make it clear that the result depends on \(n\), although there is a certain kind of stability present in his results. It would be very interesting to carry out the same calculations for general \(d\).

\subsection{Iterated configuration spaces}
Next, take \(X = \Conf_d(\mathbb{A}^1)\) and \(G = S_d\) permuting the indices, which by definition of \(\Conf_d\) is a free action. Thus we are considering \(\Conf^{S_d}_n(\Conf_d(\mathbb{A}^1))\) with the action of \(W_n = S_d \wr S_n\), and quotient \(\UConf_n(\UConf_d(\mathbb{A}^1))\). Here we have
\begin{align*} \Conf^{S_d}_n(\Conf_d(\mathbb{A}^1))(R) = \big\{&[ (x^1_1, \dots, x^1_d), \dots, (x^n_1, \dots, x^n_d)] \in (R^d)^n : \\
& x^i_a \ne x^i_b,  \{x^i_1, \dots, x^i_d\} \ne \{x^j_1, \dots, x^j_d\} \big\}
\end{align*}
Thus, \(\Conf^G_n(X)\) is the complement of a linear subspace arrangement: the subspaces are the codimension 1 spaces \(\{x^i_a = x^i_b\}\), and the codimension \(d\) spaces \(\{(x^i_1, \dots, x^i_d) = (x^j_1, \dots, x^j_d)\}\) and all its \(S_d\)-translates. Unfortunately, there are not simple results on the eigenvalues of Frobenius for a linear subspace arrangement, like there are for hyperplanes arrangements, because the different codimensions of the subspaces ``mix up'' the action of Frobenius on different cohomology classes. So all we are able to assert is the content of \thref{arith_stats} for this specific \(X\) and \(G\).
\begin{equation}
\label{iter_groth}
\lim_{n \to \infty} q^{-nd} \sum_{y \in \UConf_n(\UConf_d(\mathbb{F}_q))} P(\sigma_y) = \sum_{i = 0}^\infty (-1)^i \tr\left(\Frob_q : \langle H^i(\Conf^{G}(X); L), P\rangle\right)
\end{equation}
However, we are able to interpret the left-hand side of (8). An element \(\{f_i\} \in \UConf_n(\UConf_d(\mathbb{F}_q))\) is a set \(\{f_1, \dots, f_n\}\) of distinct monic polynomials \(f_i \in \overline{\mathbb{F}}_q[t]\) with \(d\) distinct roots such that \(\Frob_q(f_i) = f_j\). In particular, the product \(f_1 f_2 \cdots f_n \in \mathbb{F}_q[t]\). The element \(\sigma_{\{f_i\}}\) is the action of Frobenius on all the roots of the \(\{f_i\}\), which permute the \(\{f_i\}\) themselves and then further permute each of their roots. This precisely gives an element of \(S_d \wr S_n\), up to conjugacy.

A character polynomial \(P\) for \(S_d \wr S_n\) is a polynomial in \(\{X^\lambda_i \mid 1 \le i \le n, \lambda \vdash d\}\), where \(X^\lambda_i\) counts the number of \(\lambda\)-decorated \(i\)-cycles. For us, this means that we look at the cycle decomposition of how \(\Frob_q\) acts on the \(\{f_j\}\), and on the \(d\) roots of each \(f_j\). This will give us a cycle decomposition of \(n\), the action (up to conjugacy) on the \(\{f_j\}\), and each of the cycles will be labeled by a partition (cycle decomposition) of \(d\), which is the action on the \(d\) roots. \(X_i^{\lambda}\) then counts the number of \(i\)-cycles labeled with the partition \(\lambda\).

That is to say, \(X_i^{\lambda}\) counts the number of \(\{f_{j_1}, \dots, f_{j_i}\} \subset \{f_j\}\) such that Frobenius permutes the \(\{f_{j_1}, \dots, f_{j_i}\}\) cyclically, and that the product \(f_{j_1} \cdots f_{j_i}\), which must therefore lie in \(\mathbb{F}_q[t]\), decompose into irreducible pieces of degrees \(i \lambda_1, \dots, i \lambda_m\).

For example, if \(\lambda\) is a \(d\)-cycle, then \(X_2^{\lambda}\) counts the number of \(\{f_\alpha, f_\beta\} \in \{f_j\}\) such that \(f_\alpha f_\beta\) is an irreducible polynomial in \(\mathbb{F}_q[t]\). We can think of the sum
\[ \sum_{\{f_j\} \in \UConf_n(\UConf_d(\mathbb{F}_q))} X_2^{\lambda}(\sigma_{\{f_j\}})\]
as being over all monic \(F \in \mathbb{F}_q[t]\) with distinct roots of degree \(n \cdot d\), where we sum over all the possible ways of decomposing \(F = f_1 \cdots f_n\). This sum then counts the number of ways of decomposing \(F = G \cdot f_1 \cdots f_{n-2}\), where \(G \in \mathbb{F}_q[t]\) is irreducible of degree \(2d\), and \(f_j \in \overline{\mathbb{F}}_q[t]\), each of degree \(d\), satisfy \(\Frob(f_i) = f_j\). Thus, (\ref{iter_groth}) is saying that the \emph{average} number of such decompositions, across all such \(F\) of degree \(n \cdot d\), stabilizes to the right-hand side of (\ref{iter_groth}) as \(n \to \infty\).

\bibliography{general}

\end{document}